\newcommand{\cal}{\mathcal}
\newcommand{\bC}{{\Bbb C}}
\newcommand{\bL}{{\Bbb L}}
\newcommand{\bP}{{\Bbb P}}
\newcommand{\bZ}{{\Bbb Z}}
\newcommand{\cB}{{\cal B}}
\newcommand{\cD}{{\cal D}}
\newcommand{\cM}{{\cal M}}
\newcommand{\cO}{{\cal O}}
\newcommand{\cT}{{\cal T}}
\newcommand{\cU}{{\cal U}}
\newcommand{\cX}{{\cal X}}
\newcommand{\cY}{{\cal Y}}
\newcommand{\Mbar}{\overline{\cM}}
\newcommand{\mubar}{\overline{\mu}}
\newcommand{\nubar}{\overline{\nu}}
\newcommand{\xibar}{\overline{\xi}}
\newcommand{\etabar}{\overline{\eta}}
\DeclareMathOperator{\Gcd}{gcd}
\DeclareMathOperator{\Aut}{Aut}
\DeclareMathOperator{\rk}{rk}
\newcommand{\vir}{ {\mathrm{vir}} }
\newcommand{\tpi  }{\tilde{\pi}  }
\newcommand{\xn}[1]{{#1}^{\bullet s}_{\chi,\mubar,\gamma}}
\newcommand{\lam}{\lambda}
\newcommand{\bu}{\bullet}
\newcommand{\amm}{|\Aut(\nubar)||\Aut(\mubar)|}
\newcommand{\am}{|\Aut(\mubar)|}
\newtheorem{theorem}{Theorem}[section]
\newtheorem{Theorem}{Theorem}
\newtheorem{proposition}[theorem]{Proposition}
\newtheorem{lemma}[theorem]{Lemma}
\theoremstyle{remark}
\theoremstyle{definition}
\newtheorem{definition}{Definition}[section]
\begin{document}
\title[A Formula of the One-leg Orbifold Gromov-Witten Vertex]{A Formula of the One-leg Orbifold Gromov-Witten Vertex and Gromov-Witten Invariants of the Local $\cB\bZ_m$ Gerbe}
\author{Zhengyu Zong}
\address{Department of Mathematics, Columbia University, New York, NY 10027, USA}
\email{zz2197@math.columbia.edu}
\begin{abstract}
We give a formula of the framed one-leg orbifold Gromov-Witten vertex where the leg is gerby with isotropy group $\bZ_m$. Then we use this formula to compute the Gromov-Witten invariants of the local $\cB\bZ_m$ gerbe. We will also compute some examples of the degree 1 and degree 2 $\bZ_2$-Hodge integrals.
\end{abstract}
\maketitle
\section{Introduction}
For smooth toric Calabi-Yau 3-folds, the Gromov-Witten theory is obtained by gluing the Gromov-Witten topological vertex \cite{AKMV}, a generating function of cubic Hodge integrals. So the topological vertex, which is computed in \cite{Li-Liu-Liu-Zhou}, can be viewed as the building block for the GW theory of smooth toric Calabi-Yau 3-folds. In the orbifold case, a vertex formalism for the orbifold GW theory of toric Calabi-Yau 3-orbifolds is established in \cite{Ros}. For toric Calabi-Yau 3-orbifolds, the orbifold GW theory is obtained by gluing the GW orbifold vertex, a generating function of cubic abelian Hurwitz-Hodge integrals. So the orbifold GW vertex can be viewed as the building block of the orbifold GW theory of toric Calabi-Yau 3-orbifolds.

In this paper, we prove a formula of the framed one-leg orbifold Gromov-Witten vertex where the leg is gerby with isotropy group $\bZ_m$. This formula can be viewed as a counterpart of the case where the leg is effective \cite{Zong}. Unlike the effective case, the initial value of our vertex can not be computed by Mumford's relation. So we will use virtual localization and vanishing properties of certain relative GW invariants of the nontrivial $\bZ_m$-gerbes over $\bP^1$ to obtain a system of linear equations, from which we can solve the initial value. After that we will use localization on certain moduli spaces of relative stable morphisms to the trivial $\bZ_m$-gerbes over $\bP^1$ to obtain the framing dependence of the vertex. We also use our formula to compute the GW invariants of the local $\cB\bZ_m$ gerbe.

In section 5, we will calculate the degree 1 and degree 2 $\bZ_2$-vertices more explicitly. Then we will use these results to calculate the prediction of some $\bZ_2$-Hodge integrals in \cite{Ros}. These results can be viewed as an evidence for the conjecture of the orbifold GW/DT correspondence in \cite{Ros}.

After the first version of this paper, our formula was used to prove the orbifold GW/DT correspondence in \cite{Ros2}. The strategy there is to rewrite the DT vertex in terms of the loop schur functions developed in \cite{Ros3} and \cite{Lam-Pyl} to obtain some useful combinatorial properties of the DT vertex. Then since our formula for the GW vertex involves a certain kind of orbifold rubber integrals which have strongly combinatorial properties (see section 3), one can prove the GW/DT correspondence by proving certain purely combinatorial identities.

\subsection{Formula for the vertex}
\subsubsection{Framing dependence of the vertex}
Let $\Mbar_{g, \gamma}(\cB\bZ_m)$ be the moduli space of stable maps to $\cB\bZ_m$ where $\gamma=(\gamma_{1}, \cdots, \gamma_{n})$ is a vector of elements in $\bZ_m$. Let $U$ be the irreducible representation of $\bZ_m$ given by
$$\phi^U:\bZ_m\to \bC^*,\phi^U(1)=e^{\frac{2\pi\sqrt{-1}}{m}}$$
Then there is a corresponding Hodge bundle
$$E^U\to\Mbar_{g, \gamma}(\cB\bZ_m)$$
and the corresponding Hodge classes on $\Mbar_{g, \gamma}(\cB\bZ_m)$ are defined by Chern classes of $E^U$,
$$\lambda^U_i=c_i(E^U)$$
Similarly, for any irreducible representation $R$ of $\bZ_m$, we have a corresponding Hodge bundle $E^R$ and Hodge classes $\lambda^R_i$. Let $\Mbar_{g,n}$ be the moduli space of stable curves of genus $g$ with $n$ marked points and let $\psi_i$ be the $i^{th}$ descendent class on $\Mbar_{g,n}$, $1\leq i\leq n$. Let
$$\epsilon :\Mbar_{g, \gamma}(\cB\bZ_m)\to \Mbar_{g,n}$$
be the canonical morphism. Then the descendent classes $\bar{\psi}_i$ on $\Mbar_{g,n}$ are defined by
$$\bar{\psi}_i=\epsilon^*(\psi_i)$$
Let
$$
\Lambda^{\vee,R}_g(u)=u^{\rk E^R}-\lambda^R_1 u^{\rk E^R-1} +\cdots+(-1)^{\rk E^R}\lambda^R_{\rk E^R}
$$
where $\rk E^R$ is the rank of $E^R$ determined by the orbifold Riemann-Roch formula.

Let $d$ be a positive integer and let
$$
\mubar=\{(\mu_1,k_1),\cdots , (\mu_{l(\mu)}, k_{l(\mu)})\}
$$
be a $\bZ_m$-weighted partition of d. Here, $\mu$ is a partition of $d$ with parts $\mu_i$ and $k_i\in \bZ_m$. Let
$$\{1,\cdots, l(\mubar)\}=A'(\mubar)\sqcup A''(\mubar)$$
such that $k_i=0$ if and only if $i\in A'(\mubar)$. Let $l'(\mubar)=|A'(\mubar)|$ and $l''(\mubar)=|A''(\mubar)|$. For any $\mubar$, we use the notation $-\mubar$ to denote $\{(\mu_1,-k_1),\cdots , (\mu_{l(\mu)}, -k_{l(\mu)})\}$.

Now we require $\gamma=(\gamma_{1}, \cdots, \gamma_{n})$ be a vector of \emph{nontrivial} elements in $\bZ_m$. Then for $\tau\in \frac{1}{m}\bZ$, we define $G_{g,\mubar,\gamma}(\tau)_{m}$ as
\begin{eqnarray*}
&&\frac{(\sqrt{-1})^{|\mubar|+l(\mubar)-2\sum_{i\in A''(\mubar)}\frac{m-k_i}{m}}
\left(\tau(\tau+1)\right)^{\sum_{i=1}^{l(\mubar)}\delta_{0,k_i}}}{|\Aut(\mubar)|}
\prod_{i=1}^{l(\mubar)}\frac{\prod_{j=0}^{\mu_i-1}(\mu_i\tau+\frac{k_i}{m}+j)}{\mu_i!}
\left(\frac{\mu_i\tau+\frac{k_i}{m}}{\mu_i}\right)^{-\delta_{0,k_i}}\\
&&\cdot\int_{\Mbar_{g, \gamma+k}(\cB\bZ_m)}\frac{(-\tau(\tau+1))^{-\delta}\Lambda_{g}^{\vee,U}(\tau)\Lambda_{g}^{\vee,U^\vee}(-\tau-1)
\Lambda_{g}^{\vee,1}(1)}{\prod_{i=1}^{l(\mubar)}(1-\mu_i\bar{\psi}_i)}
\end{eqnarray*}
where $\gamma+k$ denotes the vector $(\gamma_{1}, \cdots, \gamma_{n},k_1,\cdots,k_{l(\mubar)})$, $\bar{\psi}_i$ corresponds to $k_i$, $U^\vee$ and 1 denote the dual of $U$ and the trivial representation respectively,
$$\delta_{0,x}=\left\{\begin{array}{ll}1, &x=0,\\
0, &x\neq 0,\end{array} \right.$$
and
$$\delta=\left\{\begin{array}{ll}1, &\textrm{if all monodromies around loops on the domain curve are trivial}\\
0, &\textrm{otherwise}.\end{array} \right.$$

Introduce formal variables $p=(p_{(i,j)})_{i\in\bZ_+,j\in\{0,\cdots,m-1\}},x=(x_1,\cdots,x_{m-1})$ and define
$$p_{\mubar}=p_{(\mu_1,k_1)}\cdots p_{(\mu_{l(\mubar)},k_{l(\mubar)})},x_{\gamma}=x_{\gamma_1}\cdots x_{\gamma_n}$$
for $\mubar$ and $\gamma$. We use the more intuitive symbol $\gamma!$ to denote $|\Aut(\gamma)|$. Then we define the generating functions
\begin{eqnarray*}
G_{\mubar,\gamma}(\lambda;\tau)_{m}&=&\sum_{g=0}^{\infty}\lambda^{2g-2+l(\mubar)+l(\gamma)}G_{g,\mubar,\gamma}(\tau)_{m}\\
G(\lambda;\tau;p;x)_{m}&=&\sum_{\mubar\neq \emptyset,\gamma}G_{\mubar,\gamma}(\lambda;\tau)_{m}p_{\mubar} \frac{x_\gamma}{\gamma!}=\sum_{\mubar\neq \emptyset}G_{\mubar}(\lambda;\tau;x)_{m}p_{\mubar}=\sum_{\gamma}G_{\gamma}(\lambda;\tau;p)_{m}\frac{x_\gamma}{\gamma!}\\
G^\bullet(\lambda;\tau;p;x)_{m}&=&\exp(G(\lambda;\tau;p;x)_{m})=
\sum_{\mubar,\gamma}G^\bullet_{\mubar,\gamma}(\lambda;\tau)_{m}p_{\mubar} \frac{x_\gamma}{\gamma!}=
1+\sum_{\mubar\neq\emptyset}G^\bullet_{\mubar}(\lambda;\tau;x)_{m}p_{\mubar}\\
G^\bullet_{\mubar,\gamma}(\lambda;\tau)_{m}&=&\sum_{\chi\in 2\bZ,\chi\leq 2l(\mubar)}\lambda^{-\chi+l(\mubar)+l(\gamma)}G^\bullet_{\chi,\mubar,\gamma}(\tau)_{m}
\end{eqnarray*}
When taking the sum over $\gamma$, we set the following convention for $\gamma$: if $i<j$, then $\gamma_j\geq \gamma_i$ where $\gamma_1,\cdots, \gamma_n$ are viewed as elements in $\{1,\cdots ,m-1\}$.

In section 3, we will define an orbifold rubber integral $H^\bullet_{\chi,\gamma}(\mubar,\nubar)_m$ and its generating function
$$\Phi^\bullet_{\mubar,\nubar}(\lambda;x)_m=\sum_{\chi,\gamma}\frac{\lambda^{-\chi+l(\mubar)+l(\nubar)+l(\gamma)}}
{(-\chi+l(\mubar)+l(\nubar))!}H^\bullet_{\chi,\gamma}(\mubar,\nubar)_m\frac{x_\gamma}{\gamma!}$$
Define $G_d(\tau)=(G^\bullet_{\nubar}(\lambda;\tau;x)_m)_{|\nubar|=d}$ and $G_d(0)=(G^\bullet_{\mubar}(\lambda;0;x)_m)_{|\mubar|=d}$ to be two column vectors indexed by $\nubar$ and $\mubar$ respectively. Let $\Phi_d(\tau)=(\Phi_d^{\mubar,\nubar}(\tau))_{|\mubar|=d,|\nubar|=d}$ be a matrix indexed by $\nubar$ and $\mubar$, where
$$\Phi_d^{\mubar,\nubar}(\tau)=Z_{\nubar}
\Phi^\bullet_{-\nubar,\mubar}(-\sqrt{-1}\tau\lambda;\sqrt{-1}^{1-\frac{2}{m}}x_1,\cdots,\sqrt{-1}^{1-\frac{2i}{m}}x_i,
\cdots,\sqrt{-1}^{1-\frac{2(m-1)}{m}}x_{m-1})_m.$$
and $Z_{\nubar}=|\Aut(\nubar)|m^{l(\nubar)}\prod_{i=1}^{l(\nubar)}\nu_i$. Then we have the following theorem
\begin{Theorem}
$\Phi_d(\tau)$ is invertible and
\begin{eqnarray*}
G_d(\tau)=\Phi_d(\tau)^{-1}G_d(0),
\end{eqnarray*}
or equivalently,
\begin{eqnarray*}
G^\bullet_{\mubar}(\lambda;\tau;x)_m=\sum_{|\nubar|=|\mubar|}G^\bullet_{\nubar}(\lambda;0;x)_m Z_{\nubar}
\Phi^\bullet_{-\nubar,\mubar}(\sqrt{-1}\tau\lambda;\tilde{x})_m.
\end{eqnarray*}
where $\tilde{x}=(\sqrt{-1}^{1-\frac{2}{m}}x_1,\cdots,\sqrt{-1}^{1-\frac{2i}{m}}x_i,
\cdots,\sqrt{-1}^{1-\frac{2(m-1)}{m}}x_{m-1})$.
\end{Theorem}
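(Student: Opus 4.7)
The plan is to prove the identity by virtual localization on a one-leg Calabi-Yau target over $\bP^1 \times \cB\bZ_m$, with the framing $\tau$ encoded by the $\bC^*$-weights on the fiber directions. Concretely I would work on the moduli space of (possibly disconnected) relative stable morphisms to a rank-two Calabi-Yau gerby bundle over $\bP^1 \times \cB\bZ_m$, with relative conditions $\nubar$ over the $0$-gerbe fiber, $\mubar$ over the $\infty$-gerbe fiber, and free marked points carrying monodromy $\gamma$, all equivariant with respect to the scaling $\bC^*$-action. The weight at the $0$-fiber is chosen so that its vertex contribution is at framing $0$, while the weight at the $\infty$-fiber is chosen to realize framing $\tau$.

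As in the non-orbifold argument of Li-Liu-Liu-Zhou and in the effective-leg case of \cite{Zong}, virtual localization decomposes the $\bC^*$-fixed loci into three pieces glued at (possibly gerby) nodes: a Hurwitz-Hodge integral over $\Mbar_{g,\gamma+k}(\cB\bZ_m)$ at the $0$-end assembling into $G^\bullet_\nubar(\lambda;0;x)_m$, a rubber contribution in the middle assembling into $\Phi^\bullet_{-\nubar,\mubar}$ (the substitution $\nubar\mapsto-\nubar$ comes from orientation-reversal at the smoothing node), and a Hurwitz-Hodge integral at the $\infty$-end assembling into $G^\bullet_\mubar(\lambda;\tau;x)_m$. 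The node-smoothing factor contributes exactly the normalization $Z_\nubar=|\Aut(\nubar)|\,m^{l(\nubar)}\prod_i\nu_i$; the rescalings $\lambda \mapsto \sqrt{-1}\tau\lambda$ and $x_i \mapsto \sqrt{-1}^{1-2i/m}x_i$ arise from matching the equivariant parameter of the rubber to the vertex conventions and from the monodromy conventions at the $\bZ_m$-gerby nodes. Reading off the degeneration relation yields
\begin{equation*}
G_\mubar^\bullet(\lambda;\tau;x)_m \;=\; \sum_{|\nubar|=|\mubar|} G_\nubar^\bullet(\lambda;0;x)_m \, Z_\nubar \, \Phi^\bullet_{-\nubar,\mubar}(\sqrt{-1}\tau\lambda;\tilde{x})_m,
\end{equation*}
which is the claimed matrix identity $G_d(\tau)=\Phi_d(\tau)^{-1}G_d(0)$.

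For invertibility of $\Phi_d(\tau)$ I would use the $\lambda$-expansion of the rubber integrals: the lowest-order term in an appropriate triangular basis is the trivial tube contribution, which gives a non-degenerate diagonal in the weighted-partition basis, so $\Phi_d(\tau)$ is invertible as a formal power series in $\lambda$. The hard part will be the careful bookkeeping of signs, $\sqrt{-1}$ factors, automorphism factors, and orbifold Riemann-Roch ranks at the gerby nodes: these ingredients are invisible in the effective-leg case of \cite{Zong} and are precisely what force the dualization $\nubar \mapsto -\nubar$ and the substitution $x \mapsto \tilde{x}$ in the final formula. One also needs to verify that the disconnected localization contributions at the two ends indeed reassemble into the exponential generating functions $G^\bullet$, so that the matrix identity holds on the nose rather than only at the connected level.
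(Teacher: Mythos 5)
Your general picture (localization over the trivial gerbe $\cY_0=\bP^1\times\cB\bZ_m$, fixed loci splitting into a Hurwitz--Hodge vertex glued to a rubber piece with gluing factor $Z_{\nubar}$) matches the paper's in flavor, but two essential mechanisms are missing, and as written the argument would not close. First, your fixed-point structure is inconsistent: you take the target relative over \emph{both} $0$ and $\infty$, yet claim an absolute Hurwitz--Hodge vertex contribution $G^\bullet_{\nubar}(\lambda;0;x)_m$ at the $0$-end; a relative divisor contributes a rubber, not a Hodge-integral vertex, so a doubly relative space produces no vertex factor at all. More importantly, a single localization never produces both a framing-$0$ and a framing-$\tau$ vertex in the same formula. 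The paper's mechanism is different: it localizes $K^{\bullet 0}_{\chi,\mubar,\gamma}=\frac{1}{\am}\int_{[\Mbar^\bullet_{\chi,\gamma}(\cY_0,\mubar)]^{\vir}}e(V^0)$, relative only at $\infty$, with a $\tau$-dependent equivariant lift of the $\bC^*$-action to the obstruction bundle $V^0$; localization yields $K^{\bullet 0}_{\mubar}(\lambda;x)=\sum_{|\nubar|=|\mubar|}G^\bullet_{\nubar}(\lambda;\tau;x)_m Z_{\nubar}\Phi^\bullet_{-\nubar,\mubar}(-\sqrt{-1}\tau\lambda;\tilde{x})_m$ with the \emph{framed} vertex sitting at the absolute point $p_0$, and the link to $G(0)$ comes from the fact that $K^{\bullet 0}_{\mubar}$ is a topological invariant independent of the choice of lift, so it may also be evaluated at $\tau=0$, where it equals $G^\bullet_{\mubar}(\lambda;0;x)$. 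Without this invariance-of-the-lift step your proposal has no source for an equality relating a $\tau$-framed quantity to a $0$-framed one.

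Second, you present the explicit formula $G^\bullet_{\mubar}(\lambda;\tau;x)_m=\sum_{\nubar}G^\bullet_{\nubar}(\lambda;0;x)_m Z_{\nubar}\Phi^\bullet_{-\nubar,\mubar}(\sqrt{-1}\tau\lambda;\tilde{x})_m$ as the direct output of localization, but in the paper it is obtained only after inverting the relation $G_d(0)=\Phi_d(\tau)G_d(\tau)$, and the identification of $\Phi_d(\tau)^{-1}$ with the matrix built from $\Phi^\bullet_{-\nubar,\mubar}(+\sqrt{-1}\tau\lambda;\tilde{x})_m$ (note the sign flip in the $\lambda$-argument) relies on the convolution identity $\Phi^\bullet_{\mubar,\nubar}(\lambda_1+\lambda_2,x)_m=\sum_{\xibar}\Phi^\bullet_{\mubar,\xibar}(\lambda_1,x)_m Z_{\xibar}\Phi^\bullet_{-\xibar,\nubar}(\lambda_2,x)_m$ together with $\Phi^\bullet_{\mubar,\nubar}(0,x)_m=\frac{1}{Z_{\mubar}}\delta_{\mubar,-\nubar}$, both consequences of the Burnside-type formula of Theorem 3.5. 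This step is entirely absent from your outline. Your invertibility argument (the constant term of $\Phi_d(\tau)$ in $\bC[[\lambda,x]]$ is diagonal, coming from the trivial-tube convention $H^\bullet_{2l(\mubar),\emptyset}(\mubar,-\mubar)_m=\frac{1}{Z_{\mubar}}$) is essentially the paper's and is fine.
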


We will calculate $H^\bullet_{\chi,\gamma}(\mubar,\nubar)_m$ and $\Phi^\bullet_{\mubar,\nubar}(\lambda;x)_m$ in section 3 in two different ways. One way is to relate $H^\bullet_{\chi,\gamma}(\mubar,\nubar)_m$ to usual double Hurwitz numbers and the other way is to give $H^\bullet_{\chi,\gamma}(\mubar,\nubar)_m$ an intrinsic combinatorial expression using representations of the wreath product. Concretely, we will prove the following theorem:\\
\\
\textbf{Theorem 3.5} (Burnside type formula for $H^\bullet_{\chi,\gamma}(\mubar,\nubar)_m$):
$$H^\bullet_{\chi,\gamma}(\mubar,\nubar)_m=\sum_{|\xibar|=d}
\frac{X_{\xibar}(\mubar)}{Z_{\mubar}}\frac{X_{\xibar}(\nubar)}{Z_{\nubar}}
F_{\xibar}(\tau_0)^r\prod_{i=1}^{l(\gamma)}F_{\xibar}(\rho_{\gamma_i}),$$
where $F_{\xibar}(\tau_0)=\sum_{i=0}^{m-1}\frac{m\kappa_{\xibar(i)}}{2}$ and
$F_{\xibar}(\rho_{\gamma_i})=\sum_{j=0}^{m-1}|\xibar(j)|
e^{\frac{-2\gamma_ij\pi\sqrt{-1}}{m}}$.

Theorem 3.5 implies that
$$\Phi^\bullet_{\mubar,\nubar}(\lambda;x)_m=\sum_{|\xibar|=d}
\frac{X_{\xibar}(\mubar)}{Z_{\mubar}}\frac{X_{\xibar}(\nubar)}{Z_{\nubar}}e^{F_{\xibar}(\tau_0)\lambda}
\prod_{i=1}^{m-1}e^{F_{\xibar}(\rho_{i})x_i\lambda}$$
So Theorem 1 expresses our framed orbifold Gromov-Witten vertex $G^\bullet_{\mubar}(\lambda;\tau;x)_m$ in terms of its initial value $G^\bullet_{\mubar}(\lambda;0;x)_m$ and the rubber integral $H^\bullet_{\chi,\gamma}(\mubar,\nubar)_m$.

\subsubsection{Calculation of the initial value}
For any integer $d\geq 1, s\in \{1,\cdots,m-1\}$ and $l\in\bZ_m$, let
$$w^s_{d}(l)=-l-ds\in \bZ_m.$$
Similarly, for any $\bZ_m$-weighted partition $\nubar=\{(\nu_1,l_1),\cdots,(\nu_{l(\nu)},l_{l(\nu)})\}$, let
$$w^s(\nubar)=\{(\nu_1,w^s_{\nu_1}(l_1)),\cdots,(\nu_{l(\nu)},w^s_{\nu_{l(\nu)}}(l_{l(\nu)}))\}.$$
Let
\begin{eqnarray*}
B_d&=&\{\etabar||\etabar|\leq d,l(\etabar)=l''(\etabar)\}\\
C_d&=&\{(\mubar,s)|\mu=\eta,s=s(\etabar),k_1=0,-w^s(\mubar\setminus \{(\mu_1,k_1)\})=\etabar\setminus \{(\eta_1,h_1)\},|\etabar|\leq d,l(\etabar)=l''(\etabar)\}.
\end{eqnarray*}
where $s=s(\etabar)$ is a function that we will explain in section 4.4. Let $$\tilde{x}=(\sqrt{-1}^{1-\frac{2}{m}}x_1,\cdots,\sqrt{-1}^{1-\frac{2i}{m}}x_i,
\cdots,\sqrt{-1}^{1-\frac{2(m-1)}{m}}x_{m-1}),$$
then we define
$$\beta_d=(-\sum_{|\xibar|=|\mubar|,l(\xibar)=l'(\xibar)}G^\bullet_{\xibar}(\lambda;0;x)_mZ_{w^s(\xibar)}
\tilde{\Phi}^\bullet_{-w^s(\xibar),\mubar}(-\frac{\sqrt{-1}s}{m}\lambda;\tilde{x})_m)_{(\mubar,s)\in C_d}$$
and
$$G'_d=(G^\bullet_{\etabar}(\lambda;0;x)_m)_{\etabar\in B_d}$$
to be two column vectors indexed by $(\mubar,s)$ and $\etabar$ respectively. Let $$\tilde{\Phi}_d(\lambda;x)=(\tilde{\Phi}_d^{(\mubar,s),\etabar}(\lambda;x))_{(\mubar,s)\in C_d,\etabar\in B_d}$$
be a matrix indexed by $(\mubar,s)$ and $\etabar$, where
$$\tilde{\Phi}_d^{(\mubar,s),\etabar}(\lambda;x)=\left\{\begin{array}{ll}0,&\textrm{if}|\etabar|>|\mubar|\\
Z_{w^s(-\etabar)}
\tilde{\Phi}^\bullet_{-w^s(-\etabar),\mubar}(-\frac{\sqrt{-1}s}{m}\lambda;\tilde{x})_m,&\textrm{if}|\etabar|=|\mubar|\\
\sum_{|\xibar|=|\mubar|-|\etabar|,l(\xibar)=l'(\xibar)}G^\bullet_{\xibar}(\lambda;0;x)_mZ_{w^s(-(\xibar\sqcup\etabar))}
\tilde{\Phi}^\bullet_{-w^s(-(\xibar\sqcup\etabar)),\mubar}(-\frac{\sqrt{-1}s}{m}\lambda;\tilde{x})_m
&\textrm{if}|\etabar|<|\mubar|\end{array} \right.$$
Then we have the following theorem
\begin{Theorem}
$\tilde{\Phi}_d(\lambda;x)$ is invertible and
\begin{eqnarray*}
G'_d=\tilde{\Phi}_d^{-1}(\lambda;x)\beta_d
\end{eqnarray*}
\end{Theorem}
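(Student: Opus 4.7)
The plan is to derive a system of linear equations for the unknown initial values $G^\bullet_{\etabar}(\lambda;0;x)_m$ with $\etabar\in B_d$ by virtual localization on moduli spaces of relative stable maps to nontrivial $\bZ_m$-gerbes over $\bP^1$, and then to show that the resulting matrix $\tilde{\Phi}_d(\lambda;x)$ is invertible so the system can be solved. For each $s\in\{1,\ldots,m-1\}$ I would fix a $\bZ_m$-gerbe $\cG_s\to\bP^1$ of fractional degree $s/m$ and consider relative stable map moduli spaces $\Mbar^\bullet_\chi(\cG_s;\mubar)$ with prescribed ramification profile $\mubar$ over one of the two $\bC^*$-fixed points. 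Applying virtual localization for the natural $\bC^*$-action, each fixed locus decomposes as a pair of orbifold vertex contributions glued along a common profile $\xibar$ via the rubber integrals $\Phi^\bullet$; because $\cG_s$ is nontrivial the framings at the two fixed points are shifted by $\pm s/m$, which is how $s$ and the twist $w^s$ enter the statement.

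I would then invoke a vanishing principle: on the nontrivial gerbe $\cG_s$, the relative invariants associated to a profile whose distinguished part has trivial isotropy (the condition $k_1=0$ in the definition of $C_d$) vanish for dimension/obstruction or monodromy reasons. The resulting localization identity is
\begin{equation*}
0 \;=\; \sum_{\xibar} Z_{\xibar}\,G^\bullet_{\xibar}\bigl(\lambda;-\tfrac{\sqrt{-1}s}{m};\tilde{x}\bigr)_m\cdot[\textrm{rubber factor in }\xibar\text{ and }\mubar],
\end{equation*}
and after using Theorem 1 to rewrite each framed vertex at $\tau=-\sqrt{-1}s/m$ in terms of initial values at $\tau=0$ via $\Phi^\bullet_{-\nubar,\mubar}$, the identity becomes linear in the unknowns $G^\bullet_{\etabar}(\lambda;0;x)_m$. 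Splitting the sum according to whether $l(\xibar)=l'(\xibar)$ (all trivial isotropy, already computed in \cite{Zong} and contributing to $\beta_d$ on the right-hand side) or $l(\etabar)=l''(\etabar)$ (all nontrivial, forming the unknowns in $G'_d$), one obtains $\tilde{\Phi}_d(\lambda;x)\,G'_d = \beta_d$. The three-case definition of the matrix entries records exactly the degree bookkeeping: vanishing when $|\etabar|>|\mubar|$, a pure rubber contribution when $|\etabar|=|\mubar|$, and a rubber contribution twisted by a known initial value $G^\bullet_{\xibar}(\lambda;0;x)_m$ with effective support when the deficit $|\mubar|-|\etabar|>0$ is absorbed by an additional effective insertion $\xibar$.

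For invertibility of $\tilde{\Phi}_d(\lambda;x)$, I would order rows and columns by decreasing $|\etabar|$. The matrix is then block upper triangular since the entries with $|\etabar|>|\mubar|$ vanish by definition, so it suffices to show that each diagonal block $\bigl(Z_{w^s(-\etabar)}\tilde{\Phi}^\bullet_{-w^s(-\etabar),\mubar}(-\tfrac{\sqrt{-1}s}{m}\lambda;\tilde{x})_m\bigr)_{|\etabar|=|\mubar|}$ is invertible. Substituting the Burnside-type formula of Theorem 3.5 this block factors as a wreath-product character matrix $\bigl(X_{\xibar}(\mubar)\bigr)$ on $\bZ_m\wr S_{d'}$ conjugated by a nonvanishing diagonal of exponentials in $\lambda$ and $x$; orthogonality of the irreducible characters of the wreath product then yields invertibility once the diagonal weight is inverted.

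The main obstacle is the localization and vanishing analysis on the nontrivial gerbe $\cG_s$: identifying which torus-fixed graphs contribute, matching the neck contributions to $\tilde{\Phi}^\bullet$ with the specific argument $-\sqrt{-1}s\lambda/m$, and verifying the vanishing that produces the zero on the left-hand side (rather than a nontrivial invariant) all require a delicate obstruction bundle and monodromy computation. In particular, the auxiliary function $s=s(\etabar)$ introduced in Section 4.4 must be chosen so that the bookkeeping of trivial- versus nontrivial-isotropy parts at the two legs is consistent across the whole system, and constructing it carefully is the technical heart of the argument.
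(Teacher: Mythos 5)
Your overall strategy matches the paper's: the linear system does come from virtual localization on the nontrivial gerbes $\cY_s$ together with the vanishing of $K^{\bullet s}_{\chi,\mubar,\gamma}$ when some $k_i=0$ (which in the paper is a rank count: the obstruction bundle $V^s$ has rank strictly less than the virtual dimension in that case). One structural difference is worth noting: in the paper the linearization of $L_s$ is chosen so that the vertex contributions at $p_0$ come out \emph{directly} at framing $\tau=0$, carrying the twisted monodromy $-w^s(\nubar)$; the twist $w^s_{d}(l)=-l-ds$ arises from how monodromy transforms along an edge of a nontrivial gerbe (Lemma II.13 of Johnson's thesis), not from a framing shift, and no appeal to Theorem 1 is needed to reduce to initial values. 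Your detour through Theorem 1 conflates these two mechanisms, though it is a repairable presentational issue.

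The genuine gap is in your invertibility argument. Character orthogonality of $\bZ_m\wr S_{d}$ shows that the \emph{full} rubber matrix $\bigl(Z_{\nubar}\Phi^\bullet_{-\nubar,\mubar}\bigr)$, indexed by \emph{all} $\bZ_m$-weighted partitions of $d$ on both sides, is invertible (this is exactly how the matrix $\Phi_d(\tau)$ in Theorem 1 is handled). But the diagonal blocks of $\tilde{\Phi}_d(\lambda;x)$ are indexed by the restricted sets $B'_d$ (profiles $\etabar$ with $l(\etabar)=l''(\etabar)$) against $C'_d$ (pairs $(\mubar,s)$ with $k_1=0$ and $s=s(\etabar)$); this is a proper square submatrix of the full rubber matrix, decorated by a choice of $s$, and a square submatrix of an invertible matrix need not be invertible. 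Orthogonality gives you nothing here --- indeed, if it did, any assignment $\etabar\mapsto s$ would work and the careful construction of $s(\etabar)$ you yourself flag as the technical heart would be unnecessary. The paper's Appendix A instead proves nonvanishing of the determinant by extracting the lowest-degree terms in $x_1$ after the substitution $x\mapsto x/\lambda$, $\lambda=x_2=\cdots=x_{m-1}=0$: the leading terms of the relevant sub-blocks $\tilde{\Phi}'_{\mu^0,\hat{\etabar}^0,i}(x_1)$ form a Vandermonde matrix in the \emph{distinct} values of $s$ ranging over $\Sigma_{\etabar}$, and a further elimination argument propagates this to the whole block $\tilde{\Phi}'_{\mu^0}(x_1)$. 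Without an argument of this kind your proof of invertibility does not go through.
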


Theorem 2 in fact determines $G^\bullet_{\mubar}(\lambda;0;x)_m$ for any $\mubar$ because of the following two results that we will show in section 4:
\begin{enumerate}
\item For any $\mubar$ with $k_1=\cdots=k_{l(\mubar)}=0$ we have
$$
G^\bullet_{\mubar}(\lambda;0;x)_m=\frac{1}{|\Aut(\mubar)|}
\prod_{i=1}^{l(\mubar)}\left(-\frac{\sqrt{-1}^{\mu_i+1}}{2m\mu_i\sin(\frac{\mu_i\lambda}{2})}\right).
$$

\item For any $\nubar$, let $\xibar=\{(\nu_i,l_i)|i\in A'(\nubar)\}$ and $\etabar=\{(\nu_i,l_i)|i\in A''(\nubar)\}$. Then
$$
G^\bullet_{\nubar}(\lambda;0;x)_m=G^\bullet_{\xibar}(\lambda;0;x)_mG^\bullet_{\etabar}(\lambda;0;x)_m
.$$
\end{enumerate}
Therefore, the only nontrivial vertices are those $G^\bullet_{\mubar}(\lambda;0;x)_2$ with $k_1,\cdots,k_{l(\mubar)}$ nontrivial and Theorem 2 calculates all of them.

We will also calculate the degree 1 and degree 2 $\bZ_2$-vertices more explicitly in section 5. Then we will use these results to calculate the prediction of some $\bZ_2$-Hodge integrals in \cite{Ros}. These results can be viewed as an evidence for the conjecture of the orbifold GW/DT correspondence in \cite{Ros}.

\subsection{The Gromov-Witten Invariants of the Local $\cB\bZ_m$ Gerbe}
Let $\cX$ be the global quotient of the resolved conifold Tot$(\cO(-1)\oplus\cO(-1)\to \bP^1)$ by $\bZ_m$ acting fiberwise by $\xi_m$ and $\xi_m^{-1}$ respectively, where $\xi_m=e^{\frac{2\pi \sqrt{-1}}{m}}$. Let $\cY_0=\bP^{1}\times \cB\bZ_m$ be the trivial $\bZ_m$ gerbe over $\bP^1$. $\cY_0$ can be viewed as $\bP^1$ with root construction \cite{Cad} of order $m$ for $\cO_{\bP^1}$. Then $\cX$ can be identified with Tot$(L_0\otimes\cO_{\cY_0}(-1)\oplus L_0^{-1}\otimes\cO_{\cY_0}(-1)\to \cY_0)$ where $L_0$ is the tautological bundle on $\cY_0$.

Define $C^\bullet_{\chi,d,\gamma}$ to be
$$C^\bullet_{\chi,d,\gamma}=\int_{[\Mbar^\bullet_{\chi,\gamma}(\cY_0,d)]^\vir}
e(R^1\pi_*F^*(L_0\otimes\cO_{\cY_0}(-1)\oplus L_0^{-1}\otimes\cO_{\cY_0}(-1)))$$
where $\Mbar^\bullet_{\chi,\gamma}(\cY_0,d)$ is the moduli space of degree $d$ stable maps from a possibly disconnected curve with Euler characteristic $\chi$ and with monodromies $\gamma$ around marked points to $\cY_0$,
$$\pi:\cU\to \Mbar^\bullet_{\chi, \gamma}(\cY_0, d)$$
is the universal domain curve and
$$F:\cU\to \cY_0$$
is the evaluation map. Let
$$C^\bullet_{d}(\lambda;x)=\sum_{\chi,\gamma}\lambda^{-\chi+l(\gamma)}C^\bullet_{\chi,d,\gamma}\frac{x_\gamma}{\gamma!}$$
Then we have the following theorem which gives the Gromov-Witten invariants of the local $\cB\bZ_m$ gerbe
\begin{Theorem}
\begin{eqnarray*}
C^\bullet_{d}(\lambda;x)=\sum_{|\mubar|=d}(-1)^{d-l'(\mubar)}G^\bullet_{\mubar}(\lambda;0;x)_m
Z_{\mubar}G^\bullet_{-\mubar}(\lambda;0;x)_m
\end{eqnarray*}
\end{Theorem}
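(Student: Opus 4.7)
The natural strategy is to apply virtual localization to $\Mbar^\bullet_{\chi,\gamma}(\cY_0,d)$ with respect to the standard $\bC^*$-action on $\bP^1$, lifted to $\cY_0 = \bP^1 \times \cB\bZ_m$ and equivariantly to the obstruction bundle $L_0 \otimes \cO_{\cY_0}(-1) \oplus L_0^{-1} \otimes \cO_{\cY_0}(-1)$. The two fixed points of $\cY_0$ are the gerby points $0_{\cY_0}$ and $\infty_{\cY_0}$, each a copy of $\cB\bZ_m$. I would take the symmetric equivariant lift on the two line summands, so that the framing parameter seen by either end of $\bP^1$ vanishes; this is precisely the reason the right-hand side is evaluated at $\tau=0$.

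Following the template of the topological vertex proof in \cite{Li-Liu-Liu-Zhou}, I would then enumerate the $\bC^*$-fixed loci. A fixed stable map splits into three types of subcurves: components contracted into $0_{\cY_0}$, components contracted into $\infty_{\cY_0}$, and, for each part $(\mu_i,k_i)$ of a $\bZ_m$-weighted partition $\mubar$ with $|\mubar|=d$, a twisted rational curve that $\mu_i$-covers $\cY_0$ with monodromy $k_i$ at $0$ and $-k_i$ at $\infty$. The contracted $0$-piece is therefore a possibly disconnected map to $\cB\bZ_m$ carrying the marked points indexed by $\gamma$ together with $l(\mubar)$ additional nodal marked points whose monodromies are recorded by $\mubar$; the $\infty$-piece is its mirror with monodromies $-\mubar$.

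Running the Atiyah--Bott formula on each fixed component, the contributions from the contracted $0$- and $\infty$-subcurves should assemble into $G^\bullet_{\mubar}(\lambda;0;x)_m$ and $G^\bullet_{-\mubar}(\lambda;0;x)_m$ respectively: the obstruction pieces $R^1\pi_*F^*(L_0\otimes \cO(-1))$ and $R^1\pi_*F^*(L_0^{-1}\otimes \cO(-1))$ restrict to the twisted Hodge classes $\Lambda^{\vee,U}_g(\tau)$ and $\Lambda^{\vee,U^\vee}_g(-\tau-1)$ evaluated at $\tau=0$, the smoothing deformation at the nodes produces the factor $\Lambda^{\vee,1}_g(1)$, and the equivariant weights of the tangent directions at the nodes yield the descendent denominator $\prod_{i=1}^{l(\mubar)}(1-\mu_i\bar{\psi}_i)^{-1}$. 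The tube factor from each $\mu_i$-fold cover reduces, by a standard orbifold Hurwitz calculation, to the automorphism/edge weight contributing $m\mu_i$ per tube; together with $|\Aut(\mubar)|$ these combine into $Z_{\mubar}=|\Aut(\mubar)|m^{l(\mubar)}\prod_i\mu_i$, and the accumulated signs from the equivariant weights of the normal bundles at the two preimages of the nodes on each tube give the overall factor $(-1)^{d-l'(\mubar)}$.

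The main obstacle will be the orbifold bookkeeping at the nodes: one must verify that the local isotropy groups on the two sides of each node are compatible (so that a tube with monodromy $k_i$ at $0$ forces monodromy $-k_i$ on the attached contracted curve at the $0$-end and $+k_i$ on the $\infty$-end, explaining the appearance of $-\mubar$), that the automorphism factors of maps and of target gerbes combine correctly into $Z_{\mubar}$, and that the equivariant weights on the $\bZ_m$-character decomposition of the line bundle fibers at the two fixed gerbes produce exactly the arguments $\tau$, $-\tau-1$, $1$ of the three $\Lambda^\vee$ classes in the vertex definition at $\tau=0$. Once these identifications are established, summing the fixed locus contributions over $\mubar$ with $|\mubar|=d$ yields the stated formula.
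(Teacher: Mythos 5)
Your strategy --- direct virtual localization on the absolute moduli space $\Mbar^\bullet_{\chi,\gamma}(\cY_0,d)$ with a symmetric linearization --- is a legitimate route and would very plausibly reproduce the formula, but it is not the route the paper takes. The paper instead degenerates $\cY_0$ into two copies of $(\cY_0,\infty)$ glued along a gerby divisor and invokes the degeneration formula of \cite{Abr-Fan}, which immediately yields
$C^\bullet_{\chi,d,\gamma}=\sum K^{\bullet 0}_{\chi^1,\mubar,\gamma^1}Z_{\mubar}K^{\bullet 0}_{\chi^2,-\mubar,\gamma^2}$
with $K^{\bullet 0}$ the relative invariants of $(\cY_0,\infty)$ twisted by the obstruction bundle $V^0$; the identification $K^{\bullet 0}_{\mubar}(\lambda;x)=G^\bullet_{\mubar}(\lambda;0;x)_m$ was already established by the localization computation in the proof of Theorem 1 (the $\tau=0$ specialization of equation for $K^{\bullet 0}_{\mubar}$), and the sign $(-1)^{d-l'(\mubar)}$ falls out of the $\sqrt{-1}$-prefactors in the definition of $K^{\bullet 0}_{\mubar}(\lambda;x)$, since the two exponents sum to $2d-2l'(\mubar)$. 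What the paper's route buys is that all the hard edge/vertex bookkeeping is done once, on the relative space; what your route buys is independence from the degeneration formula, at the cost of redoing that bookkeeping on the absolute space.

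Two cautions if you were to carry your version out. First, the vertex $G^\bullet_{\mubar}(\lambda;0;x)_m$ is \emph{not} a bare Hurwitz--Hodge integral: even at $\tau=0$ it retains the edge prefactors $\prod_{j=0}^{\mu_i-1}(\tfrac{k_i}{m}+j)/\mu_i!$ for the twisted parts $k_i\neq 0$ (only the untwisted parts degenerate to $1$). So the assembly step is not just ``contracted pieces give $G_{\mubar}$ and $G_{-\mubar}$, edges give $Z_{\mubar}$'': you must check that the full equivariant edge contribution of each $\mu_i$-fold cover, for \emph{both} summands $L_0\otimes\cO_{\cY_0}(-1)$ and $L_0^{-1}\otimes\cO_{\cY_0}(-1)$, factors precisely as the product of the two half-edge prefactors already absorbed into the definitions of $G_{\mubar}$ and $G_{-\mubar}$, times the residual $m\mu_i$ and sign. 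Second, a misattribution: $\Lambda^{\vee,1}_{g}(1)$ does not come from node smoothing --- it comes from the $H^1(C_v,\cO_{C_v})\otimes T_{p_i}$ piece of the virtual normal bundle of the contracted component (deformations of the map into the target direction); node smoothing is what produces the denominators $\prod_i(1-\mu_i\bar{\psi}_i)^{-1}$. Neither point is fatal, but both are exactly where an unchecked direct-localization argument would go wrong.
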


\subsection{Acknowledgments}
I wish to express my deepest thanks to my advisor Chiu-Chu Melissa Liu. When I encountered difficulties, she discussed problems with me patiently and helped me to find many important references. Her guidance has been always enlightening for me. Her papers \cite{Li-Liu-Liu-Zhou} \cite{Liu1} \cite{Liu-Liu-Zhou1} \cite{Liu-Liu-Zhou2} guided me through the whole process of this work. This paper could not be possible without my advisor Chiu-Chu Melissa Liu. I also wish to thank Dustin Ross for his helpful communications which are important for this paper.

\section{Moduli Spaces of Relative Stable Morphisms} \label{moduli}
\subsection{Moduli spaces}
Fix an integer $m\geq 1$. For any integer $s$ with $0\leq s\leq m-1$, let $\cY_s$ be the root construction \cite{Cad} of order $m$ for $\cO_{\bP^1}(-s)$. Then $\cY_0=\bP^{1}\times \cB\bZ_m$ is the trivial gerbe over $\bP^1$ and $\cY_s$ is the nontrivial gerbe over $\bP^1$ for $1\leq s\leq m-1$.
For any integer $a>0$ and $0\leq s\leq m-1$, let
\begin{eqnarray*}
\cY_s[a]&=&\cY_{s}\cup\cY_{0(1)}\cup\cdots\cup\cY_{0(a)}\cong \cY_{s}\cup((\bP^1_{(1)}\cup\cdots\cup\bP^1_{(a)})\times \cB\bZ_m)
\end{eqnarray*}
be the union of $\cY_s$ and $a$ copies of $\cY_0$, where $\cY_s$ is glued to $\cY_{0(1)}$ at $p_1^{(0)}$ and $\cY_{0(l)}$ is glued to $\cY_{0(l+1)}$ at $p_1^{(l)}$ for $1\leq l \leq a-1$. We call the irreducible component $\cY_{s}$ the root component and the other irreducible components the bubble components. A point $p_1^{(a)}\neq p_1^{(a-1)}$ is fixed on $\cY_{0(a)}$. Denote by $\pi_s[a]: \cY_s[a] \to \cY_s$ the map which
is identity on the root component and contracts all the bubble components
to $p_1^{(0)}$. Let
$$
\cY_0(a)=\cY_{0(1)}\cup\cdots\cup\cY_{0(a)}
$$
denote the union of bubble components of $\cY_s[a]$.

Let $\gamma=(\gamma_{1}, \cdots, \gamma_{n})$ be the vector of integers
$$1\leq \gamma_{i}\leq m-1$$
defining nontrivial elements $\gamma_{i}\in \bZ_{m}$. Let
$$
\mubar=\{(\mu_1,k_1),\cdots , (\mu_{l(\mu)}, k_{l(\mu)})\}
$$
be a $\bZ_m$-weighted partition of an integer $d>0$. Here, $\mu$ is a partition of $d$ with parts $\mu_i$ and $k_i\in \bZ_m$. Let
$$\{1,\cdots, l(\mubar)\}=A'(\mubar)\sqcup A''(\mubar)$$
such that $k_i=0$ if and only if $i\in A'(\mubar)$. Let $l'(\mubar)=|A'(\mubar)|$ and $l''(\mubar)=|A''(\mubar)|$. We set a convention for $\mubar$ as follows: If $i<j$, then $\mu_i\geq \mu_j$; if in addition $\mu_i=\mu_j$, let $c=\Gcd(\mu_i,m)$, $\bar{k}_i=k_i$(mod $c$) and $\bar{k}_j=k_j$(mod $c$), then $\bar{k}_j\geq \bar{k}_i$ where $\bar{k}_i,\bar{k}_j$ are viewed as elements in $\{0,\cdots ,c-1\}$; if in addition $\bar{k}_j= \bar{k}_i$, then $k_j\geq k_i$ where $k_i,k_j$ are viewed as elements in $\{0,\cdots ,m-1\}$. We also set the convention for $\gamma$: If $i<j$, then $\gamma_j\geq \gamma_i$ where $\gamma_1,\cdots, \gamma_n$ are viewed as elements in $\{1,\cdots ,m-1\}$.

For any $0\leq s\leq m-1$, let $\Mbar_{g, \gamma}(\cY_s, \mubar)$ be the moduli space of relative maps to $(\cY_s, \infty)$. Then a point in $\Mbar_{g, \gamma}(\cY_s, \mubar)$ is of the form
$$
f: (C, x_1,\cdots , x_n, y_1, \cdots , y_{l(\mubar)})\to (\cY_s[a],p_1^{(a)})
$$
such that
$$
f^{-1}(p_1^{(a)})=\sum_{i=1}^{l(\mubar)}\mu_i\frac{m}{\Gcd(m,k_i)}y_i
$$
as Cartier divisors and the monodromies around $y_i$ and $x_j$ are given by $k_i$ and $\gamma_j$ respectively. For a more general discussion of moduli spaces of relative stable morphisms to orbifolds, see \cite{Abr-Fan}.

We will also consider the disconnected version  $\Mbar^\bullet_{\chi, \gamma}(\cY_s, \mubar)$, where the domain curve $C$ is allowed to be disconnected with $2(h^0(\cO_{c})-h^0(\cO_{c}))=\chi$. Similarly, if we specify ramification types $\nubar,\mubar$ over $0,\infty\in\cY_s$, we can define the corresponding moduli spaces $\Mbar_{g, \gamma}(\cY_s,\nubar, \mubar)$ and $\Mbar^\bullet_{\chi,\gamma}(\cY_s,\nubar, \mubar)$ of relative stable maps.

\subsection{Torus action}
Consider the $\bC^*$-action
$$t \cdot [z^0:z^1] = [tz^0: z^1]$$
on $\bP^1$. This action lifts canonically on $\cY_s$ for any $0\leq s\leq m-1$. This induces an action on $\cY_s[a]$ with trivial actions on the bubble components. These in turn induce actions on the moduli spaces $\Mbar_{g, \gamma}(\cY_s, \mubar), \Mbar^\bullet_{\chi, \gamma}(\cY_s, \mubar), \Mbar_{g, \gamma}(\cY_s,\nubar, \mubar)$ and $\Mbar^\bullet_{\chi, \gamma}(\cY_s,\nubar, \mubar)$. Define the quotient space $\Mbar^\bullet_{\chi, \gamma}(\cY_s,\nubar, \mubar)//\bC^*$ to be
$$\Mbar^\bullet_{\chi, \gamma}(\cY_s,\nubar, \mubar)//\bC^*=(\Mbar^\bullet_{\chi, \gamma}(\cY_s,\nubar, \mubar)\setminus \Mbar^\bullet_{\chi, \gamma}(\cY_s,\nubar, \mubar)^{\bC^*})/\bC^*$$

\subsection{The obstruction bundle}
For any $0\leq s\leq m-1$, let
$$\pi_s:\cU_s\to \Mbar^\bullet_{\chi, \gamma}(\cY_s, \mubar)$$
be the universal domain curve and let $\cT_s$ be the universal target. Then there is an evaluation map
$$F_s:\cU_s\to \cT_s$$
and a contraction map
$$\tpi_s: \cT_s\to \cY_s.$$
Let $L_s=\sqrt[m]{\cO_{\bP^1}(-s)}$ be the tautological line bundle on $\cY_s$ corresponding to the root construction. Then over each point of $\cY_s$, the isotropy group $\bZ_m$ acts on the fiber of $L_s$ by multiplication by $e^{2\pi i/m}$.

Let $\cD_s\subset \cU_s$ be the divisor corresponding to the $l(\mubar)$ marked points $\{ y_1, \cdots , y_{l(\mubar)}\}$ and let $\cD_s'\subset\cU_s$ be the divisor corresponding to those marked points in $\{ y_1, \cdots , y_{l(\mubar)}\}$ which have trivial monodromies i.e. those marked points $y_i$ with $i\in A'(\mubar)$. Define
\begin{eqnarray*}
V^0_D&=&R^1(\pi_0)_*(\tilde{F_0}^*L_0(-\mathcal{D}_0') )\\
V^0_{D_d}&=&R^1(\pi_0)_* \tilde{F_0}^*(L_0^{-1}\otimes\cO_{\cY_0}(-p_{0})),
\end{eqnarray*}
where $\tilde{F}_0=\tilde\pi_0\circ F_0:\mathcal{U}_0\to \cY_0$ and $p_0=0,p_1=\infty\in\cY_0$. The fibers of $V^0_D$ and $V^0_{D_d}$ at
$$
\left[ f:(C,x_1,\ldots,x_{n}, y_1, \cdots , y_{l(\mubar)})\to \cY_0[a]\ \right]\in \Mbar^\bullet_{\chi, \gamma}(\cY_0, \mubar)
$$
are $H^1(C, \tilde{f}_0^*L_0(-D_0'))$ and $H^1(C, \tilde{f}_0^*(L_0^{-1}\otimes\cO_{\cY_0}(-p_0)))$,
respectively, where $D_0'=\sum_{i\in A'(\mubar)}y_i$, and $\tilde{f}_0=\pi_0[a]\circ f_0$. Note that
$$H^0(C, \tilde{f}_0^*L_0(-D_0'))=H^0(C, \tilde{f}_0^*(L_0^{-1}\otimes\cO_{\cY_0}(-p_0)))=0,$$
so $V^0_D$ and $V^0_{D_d}$ are vector bundles of ranks $l'(\mubar)-\frac{\chi}{2}+\sum_{i\in A''(\mubar)}\frac{k_i}{m}+\sum_{j=1}^{n}\frac{\gamma_j}{m}$ and $d-\frac{\chi}{2}+\sum_{i\in A''(\mubar)}\frac{m-k_i}{m}+\sum_{j=1}^{n}\frac{m-\gamma_j}{m}$, respectively. The obstruction bundle
$$V^0=V^0_D\oplus V^0_{D_{d}}$$
has rank $-\chi+d+l(\mubar)+n$ which is equal to the virtual dimension of $\Mbar^\bullet_{\chi, \gamma}(\cY_0, \mubar)$.

For $1\leq s\leq m-1$, define
\begin{eqnarray*}
V^s_D&=&R^1(\pi_s)_*(\tilde{F_s}^*L_s )\\
V^s_{D_d}&=&R^1(\pi_s)_* \tilde{F_s}^*(L_s^{-1}\otimes\cO_{\cY_s}(-p_{0})),
\end{eqnarray*}
where $\tilde{F}_s=\tilde\pi_s\circ F_s:\mathcal{U}_s\to \cY_s$ and $p_0=0,p_1=\infty\in\cY_s$. The fibers of $V^s_D$ and $V^s_{D_d}$ at
$$
\left[ f:(C,x_1,\ldots,x_{n}, y_1, \cdots , y_{l(\mubar)})\to \cY_s[a]\ \right]\in \Mbar^\bullet_{\chi, \gamma}(\cY_s, \mubar)
$$
are $H^1(C, \tilde{f}_s^*L_s)$ and $H^1(C, \tilde{f}_s^*(L_s^{-1}\otimes\cO_{\cY_s}(-p_0)))$,
respectively, where $\tilde{f}_s=\pi_s[a]\circ f_s$. Note that
$$H^0(C, \tilde{f}_s^*L_s)=H^0(C, \tilde{f}_s^*(L_s^{-1}\otimes\cO_{\cY_s}(-p_0)))=0,$$
because $\deg\tilde{f}_s^*L_s$ and $\deg\tilde{f}_s^*(L_s^{-1}\otimes\cO_{\cY_s}(-p_0))$ are less than zero. So $V^s_D$ and $V^s_{D_d}$ are vector bundles of ranks $\frac{sd}{m}-\frac{\chi}{2}+\sum_{i\in A''(\mubar)}\frac{k_i}{m}+\sum_{j=1}^{n}\frac{\gamma_j}{m}$ and $\frac{(m-s)d}{m}-\frac{\chi}{2}+\sum_{i\in A''(\mubar)}\frac{m-k_i}{m}+\sum_{j=1}^{n}\frac{m-\gamma_j}{m}$, respectively. The obstruction bundle
$$V^s=V^s_D\oplus V^s_{D_{d}}$$
has rank $-\chi+d+l''(\mubar)+n$ which is less than the virtual dimension of $\Mbar^\bullet_{\chi, \gamma}(\cY_s, \mubar)$ if $k_i=0$ for some $1\leq i\leq l(\mubar)$.

We lift the $\bC^*$-action to the obstruction bundle $V^s$ for $0\leq s\leq m-1$. It suffices to lift the $\bC^*$-action on $\cY_s$ to the line bundles $L_s$ and $L_s^{-1}\otimes\cO_{\cY_s}(-p_0)$. Let the weights of the $\bC^*$-action on $L_0^{-1}\otimes\cO_{\cY_0}(-p_0)$ at $p_0$ and $p_1$ be $-\tau-1$ and $-\tau$, respectively and let the weights of the $\bC^*$-action on $L_0$ at $p_0$ and $p_1$ be $\tau$ and $\tau$, respectively, where $\tau\in \frac{1}{m}\bZ$. For $1\leq s\leq m-1$, let the weights of the $\bC^*$-action on $L_s^{-1}\otimes\cO_{\cY_s}(-p_0)$ at $p_0$ and $p_1$ be $-1$ and $-\frac{s}{m}$, respectively and let the weights of the $\bC^*$-action on $L_s$ at $p_0$ and $p_1$ be $0$ and $\frac{s}{m}$, respectively.

For $0\leq s\leq m-1$, let
\begin{eqnarray*}
\xn{K}=\frac{1}{\am}\int_{[\Mbar^\bullet_{\chi, \gamma}(\cY_s, \mubar)]^{\vir}}e(V^s)
\end{eqnarray*}
Then $\xn{K}$ is a topological invariant and we have
$$\xn{K}=0$$
when $1\leq s\leq m-1$ and $k_i=0$ for some $1\leq i\leq l(\mubar)$. We will calculate $\xn{K}$ in section 4 by virtual localization.

\section{Orbifold Rubber Calculus}

\subsection{The orbifold rubber integral $H^\bullet_{\chi,\gamma}(\mubar,\nubar)_m$ and wreath Hurwitz numbers}
Similar to the case of $\Mbar^\bullet_{\chi, \gamma}(\cY_0, \mubar)$, a point $[f]\in \Mbar^\bullet_{\chi, \gamma}(\cY_0,\nubar, \mubar)$ has target of the form $\cY_0[a_{0},a_1]$, where $\cY_0[a_{0},a_1]$ is obtained by attaching $\cY_0(a_0)$ and $\cY_0(a_1)$ to $\cY_0$ at 0 and $\infty$ respectively. The distinguished points on $\cY_0[a_{0},a_1]$ are $q^0_{a_0}$ and $q^1_{a_1}$. Let $\pi: \cY_0[a_{0},a_1]\to \cY_0$ be the contraction to the root component. We always assume the ramification type over $q^1_{a_1}$ is $\mubar$ and the ramification type over $q^0_{a_0}$ is $\nubar$.

We will study the following kind of orbifold rubber integrals
$$H^\bullet_{\chi,\gamma}(\mubar,\nubar)_m=\frac{(-\chi+l(\mubar)+l(\nubar))!}{\amm}\int_{[\Mbar^\bullet_{\chi, \gamma}(\cY_0, \mubar,\nubar)//\bC^*]^{\vir}}(\psi^{0})^{-\chi+l(\mubar)+l(\nubar)+l(\gamma)-1}$$
where $\psi^{0}$ is the target $\psi$ class, the first Chern class of the line bundle $\bL_{0}$ over $\Mbar^\bullet_{\chi, \gamma}(\cY_0, \mubar,\nubar)//\bC^*$ whose fiber at
$$[f:C\to \cY_0[a_{0},a_1]]$$
is the cotangent line $T^*_{q^0_{a_0}}\cY_0[a_{0},a_1]$.

When $\gamma=\emptyset$, $H^\bullet_{\chi,\emptyset}(\mubar,\nubar)_m$ is just the disconnected wreath Hurwitz number \cite{Joh} \cite{Zhang-Zhou}. Note that $\chi\leq \textrm{min}\{2l(\mubar),2l(\nubar)\}$ and if $\gamma=\emptyset$, the equality holds if and only if $\nubar=-\mubar$, where $-\mubar$ is defined to be
$$-\mubar:=\{(\mu_1,-k_1),\cdots , (\mu_{l(\mu)}, -k_{l(\mu)})\}$$
In this case vir.dim$\Mbar^\bullet_{\chi, \gamma}(\cY_0, \mubar,\nubar)//\bC^*=-1$ and we set the convention that
$$H^\bullet_{2l(\mubar),\emptyset}(\mubar,-\mubar)_m=\frac{1}{Z_{\mubar}}$$
where $Z_{\mubar}=|\Aut(\mubar)|m^{l(\mubar)}\prod_{i=1}^{l(\mubar)}\mu_i$. The same convention is used in the study of wreath Hurwitz numbers since the Burnside formula for wreath Hurwitz numbers extends naturally to this boundary case. See \cite{Joh} \cite{Zhang-Zhou} for more details on the Burnside formula and other combinatorial expressions of the wreath Hurwitz numbers.

We define generating functions of $H^\bullet_{\chi,\gamma}(\mubar,\nubar)_m$:
\begin{eqnarray*}
\Phi^\bullet_{\mubar,\nubar,\gamma}(\lambda)_m&=&\sum_{\chi\in 2\bZ,\chi\leq\min\{2l(\mubar),2l(\nubar)\}}\frac{\lambda^{-\chi+l(\mubar)+l(\nubar)+l(\gamma)}}
{(-\chi+l(\mubar)+l(\nubar))!}H^\bullet_{\chi,\gamma}(\mubar,\nubar)_m\\
\Phi^\bullet(\lambda;p^+,p^-,x)_m&=&\sum_{\mubar,\nubar,\gamma}\Phi^\bullet_{\mubar,\nubar,\gamma}(\lambda)_mp^+_{\mubar}
p^-_{\nubar}\frac{x_\gamma}{\gamma!}=\sum_{\mubar,\nubar}\Phi^\bullet_{\mubar,\nubar}(\lambda;x)_mp^+_{\mubar}
p^-_{\nubar}
\end{eqnarray*}
where $p^+=(p^+_{(i,j)})_{i\in\bZ_+,j\in\{0,\cdots,m-1\}},p^-=(p^-_{(i,j)})_{i\in\bZ_+,j\in\{0,\cdots,m-1\}},x=(x_1,\cdots,x_{m-1})$ are formal variables, $p^+_{\mubar}=p^+_{(\mu_1,k_1)}\cdots p^+_{(\mu_{l(\mubar)},k_{l(\mubar)})}, p^-_{\nubar}=p^-_{(\nu_1,l_1)}\cdots p^-_{(\nu_{l(\nubar)},l_{l(\nubar)})},x_{\gamma}=x_{\gamma_1}\cdots x_{\gamma_n}$ and we use the more intuitive symbol $\gamma!$ to denote $|\Aut(\gamma)|$. We will also consider the connected orbifold rubber integral
$$H^\circ_{g,\gamma}(\mubar,\nubar)_m=\frac{(2g-2+l(\mubar)+l(\nubar))!}{\amm}\int_{[\Mbar_{g, \gamma}(\cY_0, \mubar,\nubar)//\bC^*]^{\vir}}(\psi^{0})^{2g-2+l(\mubar)+l(\nubar)+l(\gamma)-1}$$
and the corresponding generating functions
\begin{eqnarray*}
\Phi^\circ_{\mubar,\nubar,\gamma}(\lambda)_m&=&\sum_{g=0}^{\infty}\frac{\lambda^{2g-2+l(\mubar)+l(\nubar)+l(\gamma)}}
{(2g-2+l(\mubar)+l(\nubar))!}H^\circ_{g,\gamma}(\mubar,\nubar)_m\\
\Phi^\circ(\lambda;p^+,p^-,x)_m&=&\sum_{\mubar\neq\emptyset,\nubar\neq\emptyset,\gamma}
\Phi^\circ_{\mubar,\nubar,\gamma}(\lambda)_mp^+_{\mubar}
p^-_{\nubar}\frac{x_\gamma}{\gamma!}
=\sum_{\mubar\neq\emptyset,\nubar\neq\emptyset}\Phi^\circ_{\mubar,\nubar}(\lambda;x)_mp^+_{\mubar}
p^-_{\nubar}
\end{eqnarray*}
Then we have the following relation:
\begin{equation}\label{eqn:expphim}
\Phi^\bu(\lam;p^+,p^-,x)_m =\exp(\Phi^\circ(\lam;p^+,p^-,x)_m).
\end{equation}

Notice that although we take $\gamma$ to be a vector of nontrivial elements in $\bZ_m$, the above construction of rubber integrals and their generating functions works for all $\gamma$. In particular, we can apply our construction to the non-orbifold case. So we define
\begin{eqnarray*}
H^\bullet_{\chi,n}(\mu,\nu):&=&\frac{(-\chi+l(\mu)+l(\nu)+n)!}{|\Aut(\nu)||\Aut(\mu)|}\int_{[\Mbar^\bullet_{\chi, n}(\bP^1, \mu,\nu)//\bC^*]^{\vir}}(\psi^{0})^{-\chi+l(\mu)+l(\nu)+n-1}\\
H^\circ_{g,n}(\mu,\nu):&=&\frac{(2g-2+l(\mu)+l(\nu)+n)!}{|\Aut(\nu)||\Aut(\mu)|}\int_{[\Mbar_{g, n}(\bP^1, \mu,\nu)//\bC^*]^{\vir}}(\psi^{0})^{2g-2+l(\mu)+l(\nu)+n-1}\\
\end{eqnarray*}
We also define their generating functions to be
\begin{eqnarray*}
\Phi^\bullet_{\mu,\nu,n}(\lambda)&=&\sum_{\chi\in 2\bZ,\chi\leq\min\{2l(\mu),2l(\nu)\}}\frac{\lambda^{-\chi+l(\mu)+l(\nu)+n}}
{(-\chi+l(\mu)+l(\nu)+n)!}H^\bullet_{\chi,n}(\mu,\nu)\\
\Phi^\bullet(\lambda;p^+,p^-,x)&=&\sum_{\mu,\nu,n}\Phi^\bullet_{\mu,\nu,n}(\lambda)p^+_{\mu}
p^-_{\nu}\frac{x^n}{n!}=\sum_{\mu,\nu}\Phi^\bullet_{\mu,\nu}(\lambda;x)p^+_{\mu}
p^-_{\nu}\\
\Phi^\circ_{\mu,\nu,n}(\lambda)&=&\sum_{g=0}^{\infty}\frac{\lambda^{2g-2+l(\mu)+l(\nu)+n}}
{(2g-2+l(\mu)+l(\nu)+n)!}H^\circ_{g,n}(\mu,\nu)\\
\Phi^\circ(\lambda;p^+,p^-,x)&=&\sum_{\mu\neq\emptyset,\nu\neq\emptyset,n}
\Phi^\circ_{\mu,\nu,n}(\lambda)p^+_{\mu}
p^-_{\nu}\frac{x^n}{n!}
=\sum_{\mu\neq\emptyset,\nu\neq\emptyset}\Phi^\circ_{\mu,\nu}(\lambda;x)p^+_{\mu}
p^-_{\nu}
\end{eqnarray*}
Then we also have the relation
\begin{equation}\label{eqn:expphi1}
\Phi^\bu(\lam;p^+,p^-,x) =\exp(\Phi^\circ(\lam;p^+,p^-,x)).
\end{equation}

\subsection{Calculation of $H^\bullet_{\chi,\gamma}(\mubar,\nubar)_m$}
In this subsection, we will first give a geometric interpretation of $H^\bullet_{\chi,\gamma}(\mubar,\nubar)_m$. Then we will calculate our orbifold rubber integral $H^\bullet_{\chi,\gamma}(\mubar,\nubar)_m$ in two different ways: one is to express $H^\bullet_{\chi,\gamma}(\mubar,\nubar)_m$ in terms of the usual (non-orbifold) double Hurwitz numbers and the other is to give a combinatorial expression of $H^\bullet_{\chi,\gamma}(\mubar,\nubar)_m$ using the representation theory of the wreath product. In what follows, we assume $\gamma$ to be a vector of nontrivial elements in $\bZ_m$.
\subsubsection{A geometric interpretation of $H^\bullet_{\chi,\gamma}(\mubar,\nubar)_m$}
\begin{definition}
For given $g,\gamma,\mubar,\nubar$, we fix $2g-2+l(\mubar)+l(\nubar)+l(\gamma)$ different points on $\bP^1\setminus \{0,\infty\}$ and define $\hat{H}^\circ_{g,\gamma}(\mubar,\nubar)_m$ to be the count of degree $md$ ($d=|\mubar|=|\nubar|$) covers $f:\tilde{C}\to \bP^1$, with monodromy in the wreath product $\bZ_m\wr S_d$ (see \cite{Mac}), with prescribed monodromy: the monodromy over 0 and $\infty$ must be $\mubar$ and $\nubar$ respectively, the monodromy over each of the $2g-2+l(\mubar)+l(\nubar)$ (fixed) points must be $\{(2,0),(1,0),\cdots,(1,0)\}$, the monodromy over the (fixed) point corresponding to $\gamma_i$ must be $\{(1,\gamma_i),(1,0),\cdots,(1,0)\}$ and $\tilde{C}/\bZ_m$ is a connected genus $g$ twisted curve. If we do not require $\tilde{C}/\bZ_m$ to be connected and require the Euler characteristic of $\tilde{C}/\bZ_m$ to be $\chi$, then the corresponding number of covers is denoted by $\hat{H}^\bullet_{\chi,\gamma}(\mubar,\nubar)_m$.
\end{definition}

For $n=l(\gamma)$, consider the canonical map
$$\rho: \Mbar_{g, \gamma}(\cY_0, \mubar,\nubar)\to \Mbar_{g, n}(\bP^1, \mu,\nu)$$
which forgets the orbifold structure. Then we have the following lemma which is completely similar to lemma 6 in \cite{JPT}.
\begin{lemma} Consider $\rho|_{\cM_{g, \gamma}(\cY_0, \mubar,\nubar)//\bC^*}:\cM_{g, \gamma}(\cY_0, \mubar,\nubar)\to \cM_{g, n}(\bP^1, \mu,\nu)$, we have
$$
\deg(\rho|_{\cM_{g, \gamma}(\cY_0, \mubar,\nubar)})=\left\{\begin{array}{ll}0,
&\sum_{j=1}^{n}\gamma_{j}+\sum_{i=1}^{l(\mubar)}k_i+\sum_{q=1}^{l(\nubar)}l_q\neq 0\\
m^{2g-1},&\sum_{j=1}^{n}\gamma_{j}+\sum_{i=1}^{l(\mubar)}k_i+\sum_{q=1}^{l(\nubar)}l_q=0\end{array}\right.
$$
where $\gamma=(\gamma_{1}, \cdots, \gamma_{n}), \mubar=\{(\mu_1,k_1),\cdots , (\mu_{l(\mu)}, k_{l(\mu)})\},\nubar=\{(\nu_1,l_1),\cdots , (\nu_{l(\nu)}, l_{l(\nu)})\}$.
\end{lemma}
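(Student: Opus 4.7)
Since the lemma is explicitly advertised as parallel to Lemma 6 of \cite{JPT}, the plan is to carry out the torsor-counting argument in the present orbifold setting. The first step is to reduce to a generic point of $\cM_{g,n}(\bP^1,\mu,\nu)$: since $\rho$ is a morphism of Deligne--Mumford stacks of the same dimension (the forgetful map does not change the expected dimension, as the orbifold structure is discrete data concentrated at marked points), the degree equals the weighted count of preimages over a smooth irreducible point $[f:(C,x_1,\ldots,x_n,y_1,\ldots,y_{l(\mu)},z_1,\ldots,z_{l(\nu)})\to\bP^1]$ of the target.

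Over such a point, a preimage under $\rho$ is a twisted stable map $\tilde f:\tilde C\to\cY_0$ whose coarse data is $f$ and whose monodromies at the marked points are prescribed by $\gamma,\mubar,\nubar$. Because $\cY_0=\bP^1\times\cB\bZ_m$ is the trivial gerbe, specifying $\tilde f$ is equivalent to specifying the projection $\tilde C\to\cB\bZ_m$, i.e.\ a $\bZ_m$-torsor $P\to\tilde C$ whose local monodromies at the orbifold points are $k_1,\ldots,k_{l(\mu)}$, $l_1,\ldots,l_{l(\nu)}$ and $\gamma_1,\ldots,\gamma_n$. Thus the set-theoretic fiber of $\rho$ over $[f]$ is identified with the set of such $\bZ_m$-torsors on the punctured surface $U=C\setminus\{x_j,y_i,z_q\}$.

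The next step is the topological enumeration. Writing the standard presentation
\[
\pi_1(U)=\Bigl\langle a_1,b_1,\ldots,a_g,b_g,c_1,\ldots,c_N\ \Big|\ \prod_{j=1}^{g}[a_j,b_j]\prod_{r=1}^{N}c_r=1\Bigr\rangle
\]
with $N=n+l(\mubar)+l(\nubar)$, a $\bZ_m$-torsor with the prescribed local monodromies corresponds to a homomorphism $\pi_1(U)\to\bZ_m$ sending the $c_r$ to the prescribed elements. Abelianizing the single relation forces the sum of the chosen monodromies to vanish in $\bZ_m$; this is precisely the condition $\sum\gamma_j+\sum k_i+\sum l_q\equiv 0\pmod m$. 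When it fails, no torsor exists, giving degree zero. When it holds, the $a_j,b_j$ may be chosen freely, giving exactly $m^{2g}$ such torsors.

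Finally one weights by automorphisms. On the open locus considered, the only extra stabilizer that $\rho$ introduces is the global $\bZ_m$ acting on a torsor over the trivial gerbe $\cB\bZ_m$; the local isotropies at marked points match between source and target of $\rho$ because the monodromies are rigid data and the orbifold automorphism group at each marked point is determined by $k_i,l_q,\gamma_j$ on both sides. Dividing $m^{2g}$ by this global $\bZ_m$ yields the stated degree $m^{2g-1}$. The principal subtlety, and what I expect to be the technical core of the proof, is this automorphism bookkeeping: one must verify carefully that the fiber of $\rho$ is represented (in the stacky sense) by a scheme quotiented only by the global $\bZ_m$ of the gerbe, with the marked-point isotropies cancelling between the two moduli stacks.
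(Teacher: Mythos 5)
Your proposal is correct and follows essentially the same route as the paper: the fiber of $\rho$ over a coarse map $[f']$ is identified with the set of maps $C\to\cB\bZ_m$ (equivalently $\bZ_m$-torsors) with prescribed monodromies at the marked points, the relation in $\pi_1$ forces the vanishing condition mod $m$, the free choices around the $2g$ loops give $m^{2g}$ torsors, and dividing by the global $\bZ_m$ automorphism group (present because $\bZ_m$ is abelian) gives degree $m^{2g-1}$. Your write-up is in fact slightly more explicit than the paper's about the fundamental-group presentation and the automorphism bookkeeping.
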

\begin{proof}
Let $ \left[ f:(C,x_1,\ldots,x_{n}, y_1, \cdots , y_{l(\mubar)},z_1,\cdots,z_{l(\nubar)})\to \cY_m[a_{0},a_1] \right]$ be a point in $\cM_{g, \gamma}(\cY_0, \mubar,\nubar)$ and let $(C',x_1',\ldots,x_{n}', y_1', \cdots , y_{l(\mubar)}',z_1',\cdots,z_{l(\nubar)}')$ be the coarse curve of $C$. The map $f$ together with the projection map $\cY_m[a_{0},a_1]\to \bP^1[a_0,a_1]$ induce a map $f':(C',x_1',\ldots,x_{n}', y_1', \cdots , y_{l(\mubar)}',z_1',\cdots,z_{l(\nubar)}')\to \bP^1[a_{0},a_1]$. Then we have $\rho([f])=[f']$. Conversely, given $[f']$, the map from $C$ to $\bP^1$ is given by the composition of $f'$ and the canonical map $C\to C'$. So the preimage of $[f']$ is parameterized by the maps $C\to \cB\bZ_m$ with given monodromies $\gamma,k,l$ at the corresponding marked points. Therefore, if $\sum_{j=1}^{n}\gamma_{j}+\sum_{i=1}^{l(\mubar)}k_i+\sum_{q=1}^{l(\nubar)}l_q\neq 0$, then there are $m^{2g}$ points in a fiber of $\rho$ corresponding to the monodromies around the $2g$ noncontractible loops on $C'$. Since $\bZ_m$ is abelian, a $\bZ_m$-cover has automorphism group $\bZ_m$. So the degree of $\rho$ is $m^{2g-1}$.
\end{proof}

Recall that we have a branch morphism
$$Br:\Mbar_{g, n}(\bP^1, \mu,\nu)\to\textrm{Sym}^r\bP^1\cong \bP^r$$
where $r=2g-2+l(\mu)+l(\nu)$, and evaluation maps
$$ev_i: \Mbar_{g, n}(\bP^1, \mu,\nu)\to\bP^1$$
for $i=1,\cdots,n$. The usual nonsingularity and Bertini arguments \cite{Fan-Pan} show that
\begin{equation}\label{eqn:orbibranch}
\hat{H}^\circ_{g,\gamma}(\mubar,\nubar)_m=\frac{1}{|\Aut(\mubar)||\Aut(\nubar)|}\int_{[\Mbar_{g, \gamma}(\cY_0, \mubar,\nubar)]^\vir}(\rho\circ Br)^*(pt)\cdot (\rho\circ ev_1)^*(pt)\cdots(\rho\circ ev_n)^*(pt)
\end{equation}
Localization calculations similar to those in \cite{Liu-Liu-Zhou2} show that
$$\hat{H}^\circ_{g,\gamma}(\mubar,\nubar)_m= \frac{(2g-2+l(\mubar)+l(\nubar))!}{\amm}\int_{[\Mbar_{g, \gamma}(\cY_0, \mubar,\nubar)//\bC^*]^{\vir}}(\psi^{0})^{2g-2+l(\mubar)+l(\nubar)+l(\gamma)-1}.$$
In other words, we have
$$\hat{H}^\circ_{g,\gamma}(\mubar,\nubar)_m=H^\circ_{g,\gamma}(\mubar,\nubar)_m.$$
Similarly, we also have $\hat{H}^\bullet_{\chi,\gamma}(\mubar,\nubar)_m=H^\bullet_{\chi,\gamma}(\mubar,\nubar)_m$.

Using the same localization calculations, the following identity holds
\begin{equation}\label{eqn:branch}
H^\circ_{g,n}(\mu,\nu)=\frac{1}{|\Aut(\mu)||\Aut(\nu)|}\int_{[\Mbar_{g, n}(\bP^1, \mu,\nu)]^\vir} Br^*(pt)\cdot  ev_1^*(pt)\cdots ev_n^*(pt)
\end{equation}

Lemma 3.1 together with (\ref{eqn:orbibranch}) (\ref{eqn:branch}) and the nonsingularity and Bertini arguments show that
\begin{equation}\label{eqn:deg}
H^\circ_{g,\gamma}(\mubar,\nubar)_m=\hat{H}^\circ_{g,\gamma}(\mubar,\nubar)_m=\frac{|\Aut(\nu)||\Aut(\mu)|}{\amm}\delta_{0,\langle \frac{\sum_{j=1}^{n}\gamma_{j}+\sum_{i=1}^{l(\mubar)}k_i+\sum_{q=1}^{l(\nubar)}l_q}{m}\rangle}m^{2g-1}H^\circ_{g,n}(\mu,\nu)
\end{equation}

By the divisor equation, we have
\begin{equation}\label{eqn:div}
H^\circ_{g,n}(\mu,\nu)=\frac{d^n}{|\Aut(\mu)||\Aut(\nu)|}\int_{[\Mbar_{g, n}(\bP^1, \mu,\nu)]^\vir} Br^*(pt)=d^nH^\circ_{g}(\mu,\nu)
\end{equation}
and hence
\begin{equation}\label{eqn:n}
H^\bullet_{\chi,n}(\mu,\nu)=d^nH^\bullet_{\chi}(\mu,\nu)
\end{equation}

Another way to obtain (\ref{eqn:div}) (\ref{eqn:n}) is to use the degeneration formula. By equation (2.10) in \cite{Oko-Pan1}, we have the following relation
$$H^\bullet_{\chi,n}(\mu,\nu)=\sum_{S\subset \{2,\cdots,n\}}\sum_{\eta }H^\bullet_{\chi,n-1-|S|}(\nu,\eta)z_{\eta}\frac{1}{|\Aut(\mu)||\Aut(\eta)|}\int_{[\Mbar^\bullet_{\chi', |S|+1}(\bP^1, \eta,\mu)]^{\vir}}\omega $$
where $z_{\eta}=|\Aut(\eta)|\eta_1\cdots\eta_{l(\eta)}$, $\chi'$ is chosen to make the second integral nonzero and $\omega$ is the Poincare dual of a point. But the only way to make the second integral nonzero is to set $S=\emptyset, \eta=\mu$ and $\chi'=2l(\mu)$. In this case, we have
$$\frac{1}{|\Aut(\mu)||\Aut(\eta)|}\int_{[\Mbar^\bullet_{\chi', |S|+1}(\bP^1, \eta,\mu)]^{\vir}}\omega=\frac{1}{|\Aut(\eta)|}\sum_{i=1}^{l(\eta)}\frac{\eta_i}{\eta_1\cdots\eta_{l(\eta)}}$$
Therefore we have
$$H^\bullet_{\chi,n}(\mu,\nu)=dH^\bullet_{\chi,n-1}(\mu,\nu)$$
where $d=|\mu|=|\nu|$.
Repeating this process for $n$ times, we obtain (\ref{eqn:n}).

In conclusion, equations (\ref{eqn:expphim}) (\ref{eqn:deg}) (\ref{eqn:div}) completely determine the orbifold rubber integral $H^\bullet_{\chi,\gamma}(\mubar,\nubar)_m$.

\subsubsection{A combinatorial expression of $H^\bullet_{\chi,\gamma}(\mubar,\nubar)_m$}
Let $(\bZ_m)_d$ denote the wreath product $\bZ_m\wr S_d$ (see \cite{Mac}). Any $\bZ_m$-weighted partition $\mubar$ with $|\mubar|=d$ can be viewed as a conjugacy class in $\bZ_m\wr S_d$. By the geometric interpretation of $H^\bullet_{\chi,\gamma}(\mubar,\nubar)_m$, it is easy to show that $H^\bullet_{\chi,\gamma}(\mubar,\nubar)_m$ has the following algebraic definition:
$$H^\bullet_{\chi,\gamma}(\mubar,\nubar)_m=\frac{1}{|(\bZ_m)_d|}|\{(\sigma_0,\sigma_\infty,\sigma_1,\cdots,\sigma_r,\omega_1,
\cdots,\omega_n)\in(\bZ_m)_d^{n+r+2}|\sigma_0\sigma_\infty\cdot\sigma_1\cdots\sigma_r\cdot\omega_1
\cdots\omega_n=1\}$$
such that $\sigma_0$ has type $\mubar$, $\sigma_\infty$ has type $\nubar$, $\sigma_1,\cdots,\sigma_r$ have type $\tau_0$ and $\omega_i$ has type $\rho_{\gamma_i}$, where
$$\tau_0=\{(2,0),(1,0),\cdots,(1,0)\}$$
and
$$\rho_{\gamma_i}=\{(1,\gamma_i),(1,0),\cdots,(1,0)\}.$$

Let $\bC[(\bZ_m)_d]$ be the group algebra associated to $(\bZ_m)_d$ and $Z\bC[(\bZ_m)_d]$ the center of $\bC[(\bZ_m)_d]$. For any $\mubar$, define $C_{\mubar}\in Z\bC[(\bZ_m)_d]$ to be the sum of elements of type $\mubar$. Then by the above algebraic definition of $H^\bullet_{\chi,\gamma}(\mubar,\nubar)_m$, we have
$$H^\bullet_{\chi,\gamma}(\mubar,\nubar)_m=\frac{1}{|(\bZ_m)_d|}[1]C_{\mubar}C_{\nubar}C_{\tau_0}^r\prod_{i=1}^{l(\gamma)}
C_{\rho_{\gamma_i}}$$
where $r=2g-2+l(\mubar)+l(\nubar)$ and for any $x\in \bC[(\bZ_m)_d]$, $[1]x$ means taking the coefficient of the identity element.

The center $Z\bC[(\bZ_m)_d]$ is called the class algebra since it has a basis
$$\{C_{\mubar}\}_{|\mubar|=d}$$
indexed by the conjugacy classes. On the other hand, $Z\bC[(\bZ_m)_d]$ also has a semisimple basis
$$\{E_{\xibar}\}_{|\xibar|=d}$$
indexed by the irreducible representations of $(\bZ_m)_d$. We have
$$E_{\xibar}=\frac{\textrm{dim}\xibar}{|(\bZ_m)_d|}\sum_{|\mubar|=d}X_{\xibar}(\mubar)C_{\mubar}$$
and
$$C_{\mubar}=\sum_{|\xibar|=d}\frac{|C_{\mubar}|X_{\xibar}(\mubar)}{\textrm{dim}\xibar}E_{\xibar}=\sum_{|\xibar|=d}
F_{\xibar}(\mubar)E_{\xibar}$$
where $X_{\xibar}$ and $\textrm{dim}\xibar$ are character and dimension of the irreducible representations of $(\bZ_m)_d$ associated with $\xibar$ respectively and $F_{\xibar}(\mubar)=\frac{|C_{\mubar}|X_{\xibar}(\mubar)}{\textrm{dim}\xibar}$.

Therefore we have,
\begin{eqnarray*}
H^\bullet_{\chi,\gamma}(\mubar,\nubar)_m&=&\frac{1}{|(\bZ_m)_d|}[1]C_{\mubar}C_{\nubar}C_{\tau_0}^r\prod_{i=1}^{l(\gamma)}
C_{\rho_{\gamma_i}}\\
&=&\frac{1}{|(\bZ_m)_d|}[1](\sum_{|\xibar|=d}
F_{\xibar}(\mubar)E_{\xibar})(\sum_{|\xibar|=d}
F_{\xibar}(\nubar)E_{\xibar})(\sum_{|\xibar|=d}
F_{\xibar}(\tau_0)E_{\xibar})^r\prod_{i=1}^{l(\gamma)}(\sum_{|\xibar|=d}
F_{\xibar}(\rho_{\gamma_i})E_{\xibar})\\
&=&\frac{1}{|(\bZ_m)_d|}[1]\sum_{|\xibar|=d}(F_{\xibar}(\mubar)F_{\xibar}(\nubar)F_{\xibar}(\tau_0)^r\prod_{i=1}^{l(\gamma)}
F_{\xibar}(\rho_{\gamma_i}))E_{\xibar}\\
&=&\frac{1}{|(\bZ_m)_d|}\sum_{|\xibar|=d}(F_{\xibar}(\mubar)F_{\xibar}(\nubar)F_{\xibar}(\tau_0)^r\prod_{i=1}^{l(\gamma)}
F_{\xibar}(\rho_{\gamma_i}))\frac{(\textrm{dim}\xibar)^2}{|(\bZ_m)_d|}\\
&=&\sum_{|\xibar|=d}\frac{X_{\xibar}(\mubar)}{Z_{\mubar}}\frac{X_{\xibar}(\nubar)}{Z_{\nubar}}
F_{\xibar}(\tau_0)^r\prod_{i=1}^{l(\gamma)}F_{\xibar}(\rho_{\gamma_i})
\end{eqnarray*}
where in the fourth identity we used the fact that $[1]E_{\xibar}=\frac{(\textrm{dim}\xibar)^2}{|(\bZ_m)_d|}$.

In order to compute $F_{\xibar}(\tau_0)$ and $F_{\xibar}(\rho_{\gamma_i})$, we need to introduce some notations (see \cite{Mac}). For any $\bZ_m$-weighted partition $\mubar$, we can decompose $\mubar$ into the following form
$$\mubar=\overline{\mubar(0)}\sqcup\cdots\sqcup\overline{\mubar(m-1)}$$
where $\overline{\mubar(i)}$ is weighted by the single element $i\in\bZ_m$ i.e. $\overline{\mubar(i)}=\{(\mu(i)_1,i),\cdots,(\mu(i)_{l(\mubar(i))},i)\}$. We denote the underlying partition of $\overline{\mubar(i)}$ by $\mubar(i)$. For any $d\geq 1$ and $k\in \bZ_m$, let $p_d(k)$ be the $d^{th}$ power sum in a sequence of variables $y_k=(y_{(n,k)})_{n\geq 1}$. For any partition $\mu=\{\mu_1,\cdots,\mu_{l(\mu)}\}$, let $p_{\mu}(k)=p_{\mu_1}(k)\cdots p_{\mu_{l(\mu)}}(k)$. For any $\bZ_m$-weighted partition $\mubar$, let
$$P_{\mubar}=\prod_{k\in\bZ_m}p_{\mubar(k)}(k).$$
Then the elements $P_{\mubar}, |\mubar|\geq 1$ form a basis of the following ring
$$\Lambda_m=\bC[p_d(k)]_{d\geq 1,k\in \bZ_m}.$$
We define a bilinear form $\langle\cdot,\cdot\rangle$  on $\Lambda_m$ by setting
$$\langle P_{\mubar},P_{\nubar}\rangle=\delta_{\mubar,\nubar}Z_{\mubar}.$$

On the other hand, for any irreducible character $\alpha$ of $\bZ_m$ and any $d\geq 1$, we can define
$$p_d(\alpha)=\sum_{k\in \bZ_m}\frac{\alpha(k)}{m}p_d(k).$$
Let $\alpha_i(k)=e^{\frac{2ik\pi\sqrt{-1}}{m}},k\in\bZ_m$. Then $\{\alpha_i|0\leq i\leq m-1\}$ is the set of irreducible character of $\bZ_m$ and we have
$$p_d(k)=\sum_{i=0}^{m-1}\alpha_i(-k)p_d(\alpha_i).$$
If we regard $p_d(\alpha)$ as the $d^{th}$ power sum in a sequence of variables $y_{\alpha}=(y_{(n,\alpha)})_{n\geq 1}$, then for any partition $\mu$ we can define the Schur function $s_{\mu}(\alpha)=s_{\mu}(y_{\alpha})$. Therefore for any $\bZ_m$-weighted partition $\mubar$, we can define the Schur function
$$S_{\mubar}=\prod_{i=0}^{m-1}s_{\mubar(i)}(\alpha_i),$$
and $S_{\mubar}, |\mubar|\geq 1$ also form a basis of $\Lambda_m$.

The following proposition can be found in \cite{Mac}
\begin{proposition}
For any $\bZ_m$-weighted partitions $\mubar$ and $\xibar$, one has
$$\langle S_{\xibar},P_{\mubar}\rangle=X_{\xibar}(\mubar).$$
\end{proposition}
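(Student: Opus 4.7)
The plan is to recognize this identity as a direct consequence of Specht's Frobenius-type formula
\begin{equation*}
S_{\xibar} = \sum_{|\mubar| = d} \frac{X_{\xibar}(\mubar)}{Z_{\mubar}} P_{\mubar}
\end{equation*}
for the wreath product $(\bZ_m)_d = \bZ_m \wr S_d$; see Macdonald, Appendix B to Chapter I. Granted this expansion, the proposition follows immediately from the defining property $\langle P_{\mubar}, P_{\nubar}\rangle = \delta_{\mubar,\nubar}Z_{\mubar}$, since
\begin{equation*}
\langle S_{\xibar}, P_{\mubar}\rangle = \sum_{\nubar}\frac{X_{\xibar}(\nubar)}{Z_{\nubar}}\langle P_{\nubar}, P_{\mubar}\rangle = X_{\xibar}(\mubar).
\end{equation*}

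To prove the Frobenius formula, I would switch to the Fourier dual basis via $p_n(\alpha) = \frac{1}{m}\sum_{k}\alpha(k) p_n(k)$, which realizes $\Lambda_m$ as a tensor product $\bigotimes_{\alpha}\Lambda_{\alpha}$ of $m$ copies of the classical ring of symmetric functions, indexed by the irreducible characters $\alpha$ of $\bZ_m$. By construction $S_{\xibar}$ becomes the external product $\prod_{\alpha} s_{\xibar(\alpha)}(\alpha)$ of classical Schur functions in these tensor factors. On the representation-theoretic side, Specht's construction realizes the irreducible $(\bZ_m)_d$-representation labeled by $\xibar$ by inducing, from the Young-like subgroup $\prod_{\alpha}(\bZ_m)_{|\xibar(\alpha)|}$ up to $(\bZ_m)_d$, the external tensor product $\bigotimes_{\alpha}(S^{\xibar(\alpha)}\otimes \alpha^{\otimes |\xibar(\alpha)|})$, where $S^{\xibar(\alpha)}$ is the classical Specht module. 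Computing the character of this induced representation by Frobenius reciprocity decouples the computation across the $\alpha$-factors, in each of which the ordinary identity $\chi_{\xi}(\lambda) = \langle s_{\xi}, p_{\lambda}\rangle$ recovers $s_{\xibar(\alpha)}(\alpha)$; multiplying over $\alpha$ yields $S_{\xibar}$.

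The main obstacle is the combinatorial bookkeeping in the Fourier step, in particular tracking the centralizer order $Z_{\mubar} = |\Aut(\mubar)|m^{l(\mubar)}\prod\mu_i$ of the wreath product and verifying that induction from the Young-like subgroup splits cleanly across the $\alpha$-factors. Once one checks that the factor $m^{l(\mubar)}$ absorbs the roots-of-unity weights arising from the Fourier transform and that the classical centralizers $z_{\mubar(\alpha)}$ reassemble correctly into $Z_{\mubar}$, the identity reduces to $m$ independent instances of the ordinary symmetric-group Schur--power duality $\langle s_{\xi}, p_{\mu}\rangle = \chi_{\xi}(\mu)$.
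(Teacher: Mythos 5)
Your proposal is correct and is precisely the argument the paper is pointing to: the paper offers no proof of Proposition 3.3, merely citing Macdonald, and your derivation via the Frobenius characteristic map for wreath products (the expansion $S_{\xibar}=\sum_{\mubar}Z_{\mubar}^{-1}X_{\xibar}(\mubar)P_{\mubar}$, reduced through the Fourier-dual variables $p_n(\alpha)$ and induction from Young-like subgroups to $m$ copies of the classical identity $\langle s_\xi,p_\mu\rangle=\chi_\xi(\mu)$) is exactly the proof in Appendix B to Chapter I of that reference. The immediate deduction of the proposition from the expansion together with $\langle P_{\mubar},P_{\nubar}\rangle=\delta_{\mubar,\nubar}Z_{\mubar}$ is also correct.
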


In particular, if we define $\epsilon$ to be
$$\epsilon=\{(1,0),\cdots,(1,0)\},$$
then for $|\xibar|=|\epsilon|=d$ we have
$$\textrm{dim}\xibar=X_{\xibar}(\epsilon)=\langle S_{\xibar},P_{\epsilon}\rangle=\langle S_{\xibar},(\sum_{i=0}^{m-1}
p_1(\alpha_i))^d\rangle=d!\prod_{i=0}^{m-1}\frac{\textrm{dim}\xibar(i)}{|\xibar(i)|}.$$

When $m=1$, $F_{\xibar}(\tau_0)$ can be computed in the following proposition (see Example 7 in page 117 of \cite{Mac}):
\begin{proposition}
Let $\xi$ be an ordinary partition of $d\geq 1$. Then
$$F_{\xi}(\tau_0)=\frac{\kappa_{\xi}}{2}$$
where $\kappa_{\xi}=\sum_{i=1}^{l(\xi)}\xi_i(\xi_i-2i+1)$ and $\tau_0=(1^{d-2}2)$.
\end{proposition}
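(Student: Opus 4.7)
The plan is to identify $F_{\xi}(\tau_0)$ as the scalar by which the central element $C_{\tau_0}\in Z\bC[S_d]$ acts on the irreducible $S_d$-module $V^{\xi}$, and then compute this scalar explicitly using Jucys--Murphy elements. By construction $F_{\xi}(\mubar)=|C_{\mubar}|X_{\xi}(\mubar)/\textrm{dim}\,\xi$, and since $C_{\mubar}$ is central, Schur's lemma forces it to act on $V^{\xi}$ by a scalar; taking traces identifies that scalar with $F_{\xi}(\mubar)$. Applied to $\mubar=\tau_0$, we therefore reduce the proposition to computing the eigenvalue of the sum of all transpositions $\sum_{1\leq i<j\leq d}(i,j)$ on $V^{\xi}$.

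To compute this eigenvalue, I would decompose the sum of transpositions as $\sum_{k=2}^{d}J_{k}$, where $J_{k}=\sum_{i<k}(i,k)$ is the $k$-th Jucys--Murphy element. The classical theorem of Jucys--Murphy (equivalently, the Okounkov--Vershik approach to the representation theory of $S_d$) asserts that in the Gelfand--Tsetlin basis of $V^{\xi}$, indexed by standard Young tableaux $T$ of shape $\xi$, each $J_{k}$ is simultaneously diagonalized, with eigenvalue on the vector labelled by $T$ equal to the content $c_{T}(k)=j-i$ of the cell $(i,j)\in\xi$ containing the entry $k$. Summing the $J_{k}$ over $k=2,\dots,d$ (or over $k=1,\dots,d$, since the cell holding $1$ has content $0$) shows that $C_{\tau_0}$ acts on every basis vector by the same total content $\sum_{(i,j)\in\xi}(j-i)$, which must therefore equal $F_{\xi}(\tau_0)$.

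A direct computation of the total content finishes the proof:
\begin{align*}
\sum_{(i,j)\in\xi}(j-i)=\sum_{i=1}^{l(\xi)}\sum_{j=1}^{\xi_i}(j-i)=\sum_{i=1}^{l(\xi)}\left(\frac{\xi_i(\xi_i+1)}{2}-i\xi_i\right)=\frac{1}{2}\sum_{i=1}^{l(\xi)}\xi_i(\xi_i-2i+1)=\frac{\kappa_\xi}{2}.
\end{align*}

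There is no genuine obstacle here: the only non-trivial input is the Jucys--Murphy diagonalization, which is standard. A purely character-theoretic alternative would be to invoke Frobenius's formula, expanding the power sum $p_2 p_1^{d-2}$ in the Schur basis to read off $X_{\xi}(\tau_0)$ and then matching the resulting combinatorial sum with $\kappa_{\xi}/2$; but the Jucys--Murphy route above reaches the identity $F_{\xi}(\tau_0)=\kappa_{\xi}/2$ most transparently and will generalize cleanly to the wreath-product computation of $F_{\xibar}(\tau_0)$ needed for Theorem~3.5.
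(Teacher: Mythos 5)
Your proof is correct, but it follows a different route from the paper, which does not actually prove this proposition: it simply cites Example 7 on p.~117 of Macdonald's book, where the identity is obtained by symmetric-function/character manipulations (expanding $p_2p_1^{d-2}$ in the Schur basis via the Frobenius character formula --- essentially the ``character-theoretic alternative'' you mention at the end). Your argument instead identifies $F_{\xi}(\tau_0)$ with the scalar by which the central element $C_{\tau_0}$ acts on $V^{\xi}$, decomposes the sum of transpositions as $\sum_k J_k$, and invokes the Jucys--Murphy/Okounkov--Vershik diagonalization to read off the total content $\sum_{(i,j)\in\xi}(j-i)=\kappa_{\xi}/2$; every step, including the final content computation, checks out. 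What each approach buys: yours is self-contained and conceptually transparent, and the central-character interpretation does generalize to $\bZ_m\wr S_d$. However, the paper's downstream use of this proposition (Lemma 3.4, computing $F_{\xibar}(\tau_0)$ and $F_{\xibar}(\rho_{\gamma_i})$) is carried out entirely in the symmetric-function formalism, via the pairing $\langle S_{\xibar},P_{\mubar}\rangle=X_{\xibar}(\mubar)$ of Proposition 3.3 and a reduction to the $m=1$ case; so the Macdonald-style formulation is the one that plugs directly into the rest of Section 3, whereas your route would require redoing that lemma in the Jucys--Murphy language (which is possible but is not what the paper does).
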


Now let us compute $F_{\xibar}(\tau_0)$ and $F_{\xibar}(\rho_{\gamma_i})$ by proving the following lemma:
\begin{lemma}
\begin{eqnarray*}
F_{\xibar}(\tau_0)&=&\sum_{i=0}^{m-1}\frac{m\kappa_{\xibar(i)}}{2},\\
F_{\xibar}(\rho_{\gamma_i})&=&\sum_{j=0}^{m-1}|\xibar(j)|
e^{\frac{-2\gamma_ij\pi\sqrt{-1}}{m}}
\end{eqnarray*}
\end{lemma}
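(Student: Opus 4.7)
The plan is to compute both $F_{\xibar}(\tau_0)$ and $F_{\xibar}(\rho_{\gamma_i})$ directly from the definition $F_{\xibar}(\mubar)=|C_{\mubar}|X_{\xibar}(\mubar)/\textrm{dim}\,\xibar$, using Proposition~3.2 to identify $X_{\xibar}(\mubar)=\langle S_{\xibar},P_{\mubar}\rangle$. The class sizes follow from $|C_{\mubar}|=|(\bZ_m)_d|/Z_{\mubar}$, giving $|C_{\tau_0}|=md(d-1)/2$ and $|C_{\rho_{\gamma_i}}|=d$. Via the inversion $p_d(k)=\sum_j\alpha_j(-k)p_d(\alpha_j)$ rewrite $P_{\tau_0}=(\sum_j p_1(\alpha_j))^{d-2}(\sum_j p_2(\alpha_j))$ and $P_{\rho_{\gamma_i}}=(\sum_j p_1(\alpha_j))^{d-1}(\sum_j \alpha_j(-\gamma_i)p_1(\alpha_j))$. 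A direct computation from $\langle p_d(k),p_e(l)\rangle = md\delta_{d,e}\delta_{k,l}$ yields $\langle p_d(\alpha_i),p_e(\alpha_j)\rangle = d\delta_{d,e}$ when $i+j\equiv 0\pmod m$ and zero otherwise, so the pairing of $S_{\xibar}=\prod_j s_{\xibar(j)}(\alpha_j)$ against monomials in the $p(\alpha_j)$'s factors color by color and, within each single color, reduces to the classical Hall pairing on $\Lambda$ after the identification $\alpha_j\leftrightarrow\alpha_{-j}$.

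For $\rho_{\gamma_i}$, only those multinomial terms whose color multiplicities match the $|\xibar(\cdot)|$ survive pairing with $S_{\xibar}$, each contributing $\prod_j\textrm{dim}\,\xibar(j)$ via the classical identity $\langle s_\lambda,p_1^n\rangle=\textrm{dim}\,\lambda$ for $|\lambda|=n$. Summing over the single color receiving the extra $p_1$ from the $\sum_j\alpha_j(-\gamma_i)p_1(\alpha_j)$ factor, weighted by the multinomial coefficient $(d-1)!|\xibar(\cdot)|/\prod_l|\xibar(l)|!$ and the appropriate character factor, yields $X_{\xibar}(\rho_{\gamma_i})=\frac{\textrm{dim}\,\xibar}{d}\sum_j|\xibar(j)|e^{-2\gamma_i j\pi\sqrt{-1}/m}$; multiplication by $|C_{\rho_{\gamma_i}}|/\textrm{dim}\,\xibar=d/\textrm{dim}\,\xibar$ gives the claimed formula. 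For $\tau_0$, the factor $\sum_j p_2(\alpha_j)$ forces a single color $j_0$ to carry the $p_2$; pairing in colors $j\neq j_0$ gives $\textrm{dim}\,\xibar(j)$, while in color $j_0$ one has $\langle s_{\xibar(j_0)},p_1^{|\xibar(j_0)|-2}p_2\rangle=\chi^{\xibar(j_0)}((2,1^{|\xibar(j_0)|-2}))$, which by Proposition~3.3 equals $\textrm{dim}\,\xibar(j_0)\cdot\kappa_{\xibar(j_0)}/(|\xibar(j_0)|(|\xibar(j_0)|-1))$. Summing over $j_0$ with multinomial factor $(d-2)!|\xibar(j_0)|(|\xibar(j_0)|-1)/\prod_l|\xibar(l)|!$ and multiplying by $|C_{\tau_0}|/\textrm{dim}\,\xibar=(md(d-1)/2)/\textrm{dim}\,\xibar$, the cancellation of $d(d-1)$ and $|\xibar(j_0)|(|\xibar(j_0)|-1)$ factors produces $F_{\xibar}(\tau_0)=\sum_j m\kappa_{\xibar(j)}/2$.

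The main obstacle is handling the twisted inner product in which $p_d(\alpha_j)$ pairs with $p_d(\alpha_{-j})$ rather than itself; once one verifies that, color by color and after the $\alpha_j\leftrightarrow\alpha_{-j}$ identification, the pairing reduces to the standard Hall pairing on $\Lambda$, the remaining computations amount to routine multinomial bookkeeping together with a single invocation of Proposition~3.3 applied to the transposition class in $S_{|\xibar(j_0)|}$.
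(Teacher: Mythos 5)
Your proof follows essentially the same route as the paper's: rewrite $P_{\tau_0}$ and $P_{\rho_{\gamma_i}}$ in the $p_d(\alpha_j)$ basis via $p_d(k)=\sum_j\alpha_j(-k)p_d(\alpha_j)$, expand multinomially, factor the pairing with $S_{\xibar}=\prod_j s_{\xibar(j)}(\alpha_j)$ color by color so that only the term matching the multiplicities $d_j=|\xibar(j)|$ survives, evaluate the distinguished color by Proposition 3.3 (for $\tau_0$) or by the extra weighted $p_1$ (for $\rho_{\gamma_i}$), and multiply by $|C_{\mubar}|/\dim\xibar$ with the same class sizes $|C_{\tau_0}|=md(d-1)/2$ and $|C_{\rho_{\gamma_i}}|=d$. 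The only place you go beyond the paper is the anti-diagonal pairing $\langle p_d(\alpha_i),p_d(\alpha_j)\rangle = d\,\delta_{i+j\equiv 0\ (\mathrm{mod}\ m)}$, which the paper sidesteps by pairing $s_{\xibar(j)}(\alpha_j)$ directly against monomials in $p(\alpha_j)$ of the same index (Macdonald's convention, where the form is effectively Hermitian); if you insist on the literal bilinear twist you must check that the reindexing $j\mapsto -j$ does not conjugate the exponential in $F_{\xibar}(\rho_{\gamma_i})$ --- with the standard normalization it does not, and your final formulas agree with the paper's.
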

\begin{proof}
Let $|\xibar|=|\tau_0|=|\rho_{\gamma_i}|=d, d_j=|\xibar(j)|, j=0,\cdots,m-1$. Then we have
\begin{eqnarray*}
X_{\xibar}(\tau_0)&=&\langle S_{\xibar},P_{\tau_0}\rangle=\langle\prod_{i=0}^{m-1}S_{\xibar(i)}(\alpha_i),p_1(0)^{d-2}p_2(0)\rangle\\
&=&\langle\prod_{i=0}^{m-1}S_{\xibar(i)}(\alpha_i),(\sum_{j=0}^{m-1}p_1(\alpha_j))^{d-2}
(\sum_{l=0}^{m-1}p_2(\alpha_l))\rangle\\
&=&(d-2)!\sum_{l=0}^{m-1}\sum_{\sum_{j=0}^{m-1}n_j=d-2}\langle\prod_{i=0}^{m-1}S_{\xibar(i)}(\alpha_i),\prod_{j\neq l}
\frac{p_1(\alpha_j)^{n_j}}{n_j!}\frac{p_1(\alpha_l)^{n_l}}{n_l!}p_2(\alpha_l)\rangle\\
&=&(d-2)!\sum_{l=0}^{m-1}\langle\prod_{i=0}^{m-1}S_{\xibar(i)}(\alpha_i),\prod_{j\neq l}
\frac{p_1(\alpha_j)^{d_j}}{d_j!}\frac{p_1(\alpha_l)^{d_l-2}}{(d_l-2)!}p_2(\alpha_l)\rangle\\
&=&(d-2)!\sum_{l=0}^{m-1}\prod_{j\neq l}\langle S_{\xibar(j)}(\alpha_j),\frac{p_1(\alpha_j)^{d_j}}{d_j!}\rangle
\langle S_{\xibar(l)}(\alpha_l),\frac{p_1(\alpha_l)^{d_l-2}}{(d_l-2)!}p_2(\alpha_l)\rangle\\
&=&(d-2)!\sum_{l=0}^{m-1}\prod_{j\neq l}\frac{\textrm{dim}\xibar(j)}{d_j!}\frac{\kappa_{\xibar(l)}\textrm{dim}\xibar(l)}
{(d_l-2)!d_l(d_l-1)}\\
&=&(d-2)!\sum_{l=0}^{m-1}\kappa_{\xibar(l)}\prod_{j=0}^{m-1}\frac{\textrm{dim}\xibar(j)}{d_j!}\\
&=&\frac{1}{d(d-1)}\textrm{dim}\xibar\sum_{l=0}^{m-1}\kappa_{\xibar(l)}\\
\end{eqnarray*}
where in the sixth identity we used Proposition 3.3. Note that $|C(\mubar)|=\frac{|(\bZ_m)_d|}{Z_{\mubar}}$ for any $\bZ_m$-weighted partition $\mubar$. So we have
$$F_{\xibar}(\tau_0)=\frac{|C_{\tau_0}|X_{\xibar}(\tau_0)}{\textrm{dim}\xibar}=
\frac{d!m^dX_{\xibar}(\tau_0)}{2(d-2)!m^{d-1}\textrm{dim}\xibar}=\sum_{i=0}^{m-1}\frac{m\kappa_{\xibar(i)}}{2}$$
which proves the first identity. For the second identity, we have
\begin{eqnarray*}
X_{\xibar}(\rho_{\gamma_i})&=&\langle S_{\xibar},P_{\rho_{\gamma_i}}\rangle=\langle\prod_{a=0}^{m-1}S_{\xibar(a)}(\alpha_a),p_1(0)^{d-1}p_1(\gamma_i)\rangle\\
&=&\langle\prod_{a=0}^{m-1}S_{\xibar(a)}(\alpha_a),(\sum_{j=0}^{m-1}p_1(\alpha_j))^{d-1}
(\sum_{l=0}^{m-1}\alpha_l(-\gamma_i)p_1(\alpha_l))\rangle\\
&=&(d-1)!\sum_{l=0}^{m-1}\sum_{\sum_{j=0}^{m-1}n_j=d-1}\langle\prod_{a=0}^{m-1}S_{\xibar(a)}(\alpha_a),\prod_{j\neq l}
\frac{p_1(\alpha_j)^{n_j}}{n_j!}\frac{p_1(\alpha_l)^{n_l}}{n_l!}\alpha_l(-\gamma_i)p_1(\alpha_l)\rangle\\
&=&(d-1)!\sum_{l=0}^{m-1}\langle\prod_{a=0}^{m-1}S_{\xibar(a)}(\alpha_a),\prod_{j\neq l}
\frac{p_1(\alpha_j)^{d_j}}{d_j!}\frac{p_1(\alpha_l)^{d_l-1}}{(d_l-1)!}\alpha_l(-\gamma_i)p_1(\alpha_l)\rangle\\
&=&(d-1)!\sum_{l=0}^{m-1}\prod_{j\neq l}\langle S_{\xibar(j)}(\alpha_j),\frac{p_1(\alpha_j)^{d_j}}{d_j!}\rangle
\langle S_{\xibar(l)}(\alpha_l),\alpha_l(-\gamma_i)\frac{p_1(\alpha_l)^{d_l-1}}{(d_l-1)!}p_1(\alpha_l)\rangle\\
&=&(d-1)!\sum_{l=0}^{m-1}\prod_{j\neq l}\frac{\textrm{dim}\xibar(j)}{d_j!}\frac{\alpha_l(-\gamma_i)\textrm{dim}\xibar(l)}
{(d_l-1)!}\\
&=&(d-1)!\sum_{l=0}^{m-1}d_l\alpha_l(-\gamma_i)\prod_{j=0}^{m-1}\frac{\textrm{dim}\xibar(j)}{d_j!}\\
&=&\frac{1}{d}\textrm{dim}\xibar\sum_{l=0}^{m-1}d_l\alpha_l(-\gamma_i).
\end{eqnarray*}
Note that $|C(\rho_{\gamma_i})|=\frac{|(\bZ_m)_d|}{Z_{\rho_{\gamma_i}}}=\frac{d!m^d}{(d-1)!m^d}=d$, so we have
$$F_{\xibar}(\rho_{\gamma_i})=\frac{|C_{\rho_{\gamma_i}}|X_{\xibar}(\rho_{\gamma_i})}{\textrm{dim}\xibar}=
\sum_{j=0}^{m-1}|\xibar(j)|e^{\frac{-2\gamma_ij\pi\sqrt{-1}}{m}},$$
which proves the second identity.

\end{proof}

We summarize the above computation in the following theorem
\begin{theorem}[Burnside type formula for $H^\bullet_{\chi,\gamma}(\mubar,\nubar)_m$]
$$H^\bullet_{\chi,\gamma}(\mubar,\nubar)_m=\sum_{|\xibar|=d}
\frac{X_{\xibar}(\mubar)}{Z_{\mubar}}\frac{X_{\xibar}(\nubar)}{Z_{\nubar}}
F_{\xibar}(\tau_0)^r\prod_{i=1}^{l(\gamma)}F_{\xibar}(\rho_{\gamma_i}),$$
where $F_{\xibar}(\tau_0)=\sum_{i=0}^{m-1}\frac{m\kappa_{\xibar(i)}}{2}$ and
$F_{\xibar}(\rho_{\gamma_i})=\sum_{j=0}^{m-1}|\xibar(j)|
e^{\frac{-2\gamma_ij\pi\sqrt{-1}}{m}}$.
\end{theorem}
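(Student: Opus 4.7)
The plan is to translate the question into pure representation theory of the wreath product $(\bZ_m)_d = \bZ_m\wr S_d$ and then diagonalize. By the geometric interpretation in Definition 3.1 and the correspondence $\hat{H}^\bullet_{\chi,\gamma}(\mubar,\nubar)_m = H^\bullet_{\chi,\gamma}(\mubar,\nubar)_m$ established earlier in this subsection, counting branched covers with prescribed monodromy is equivalent to counting tuples $(\sigma_0,\sigma_\infty,\sigma_1,\ldots,\sigma_r,\omega_1,\ldots,\omega_n)$ in $(\bZ_m)_d^{n+r+2}$ multiplying to the identity, with $\sigma_0,\sigma_\infty$ of types $\mubar,\nubar$, the $\sigma_i$ of type $\tau_0$, and $\omega_i$ of type $\rho_{\gamma_i}$. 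This Burnside-style count may be written inside the class algebra $Z\bC[(\bZ_m)_d]$ as
\[
H^\bullet_{\chi,\gamma}(\mubar,\nubar)_m
=\frac{1}{|(\bZ_m)_d|}\,[1]\,C_{\mubar}C_{\nubar}C_{\tau_0}^{r}\prod_{i=1}^{l(\gamma)}C_{\rho_{\gamma_i}},
\]
where $[1]$ denotes extraction of the coefficient of the identity.

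Next I would pass to the semisimple basis $\{E_{\xibar}\}_{|\xibar|=d}$ indexed by irreducible representations of $(\bZ_m)_d$. The change of basis formula $C_{\mubar}=\sum_{\xibar}F_{\xibar}(\mubar)E_{\xibar}$ together with the orthogonal-idempotent property, which implies $[1]E_{\xibar}=(\dim\xibar)^2/|(\bZ_m)_d|$, turns the product in the class algebra into a diagonal sum. After substituting $F_{\xibar}(\mubar)=|C_{\mubar}|X_{\xibar}(\mubar)/\dim\xibar$ and using $|C_{\mubar}|=|(\bZ_m)_d|/Z_{\mubar}$, the prefactors $1/|(\bZ_m)_d|$ collapse correctly and leave exactly the claimed shape $\frac{X_\xibar(\mubar)}{Z_\mubar}\frac{X_\xibar(\nubar)}{Z_\nubar}F_\xibar(\tau_0)^r\prod_i F_\xibar(\rho_{\gamma_i})$.

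It then remains to evaluate the two class functions $F_{\xibar}(\tau_0)$ and $F_{\xibar}(\rho_{\gamma_i})$. For this I would use Proposition 3.2, which identifies $X_{\xibar}(\mubar)=\langle S_{\xibar},P_{\mubar}\rangle$ in the ring $\Lambda_m$, together with the expansions $p_d(k)=\sum_{l}\alpha_l(-k)p_d(\alpha_l)$ so that $p_2(0)$ splits as $\sum_l p_2(\alpha_l)$ and $p_1(\gamma_i)$ as $\sum_l\alpha_l(-\gamma_i)p_1(\alpha_l)$. Since $S_{\xibar}=\prod_i s_{\xibar(i)}(\alpha_i)$ factors across irreducible characters of $\bZ_m$, the bilinear form splits into a product over $i$, and in each factor all but one component reduces to $\langle s_{\xibar(j)},p_1^{d_j}/d_j!\rangle=\dim\xibar(j)/d_j!$. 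For $\tau_0$ the remaining factor is $\langle s_{\xibar(l)},p_1^{d_l-2}p_2/(d_l-2)!\rangle$, which is the classical symmetric-function computation of Proposition 3.3 giving $\kappa_{\xibar(l)}\dim\xibar(l)/(d_l!\,d_l(d_l-1)/(d_l(d_l-1)))$; for $\rho_{\gamma_i}$ the analogous factor gives $d_l\alpha_l(-\gamma_i)\dim\xibar(l)/d_l!$. Summing over $l$ and dividing by $\dim\xibar=d!\prod_j\dim\xibar(j)/d_j!$ produces the stated formulas $F_{\xibar}(\tau_0)=\sum_i m\kappa_{\xibar(i)}/2$ and $F_{\xibar}(\rho_{\gamma_i})=\sum_j|\xibar(j)|e^{-2\pi\sqrt{-1}\gamma_i j/m}$.

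The only delicate point is the combinatorial bookkeeping when distributing the single special factor ($p_2$ or $\alpha_l(-\gamma_i)p_1$) across the $m$ components of $\xibar$: one must carefully track the sizes $\sum_j n_j=d-2$ (or $d-1$) and check that the class-algebra sizes $|C_{\tau_0}|=d!m^d/(2(d-2)!m^{d-1})$ and $|C_{\rho_{\gamma_i}}|=d$ produce precisely the factor $m/2$ (resp.~$1$) needed to clean up the formula. Once this counting is done, Theorem 3.5 follows directly from the two main equations above.
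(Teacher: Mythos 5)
Your proposal follows essentially the same route as the paper: the geometric-to-algebraic translation via Definition 3.1, the class-algebra identity $H^\bullet_{\chi,\gamma}(\mubar,\nubar)_m=\frac{1}{|(\bZ_m)_d|}[1]C_{\mubar}C_{\nubar}C_{\tau_0}^r\prod_i C_{\rho_{\gamma_i}}$, diagonalization in the idempotent basis $\{E_{\xibar}\}$ using $[1]E_{\xibar}=(\dim\xibar)^2/|(\bZ_m)_d|$, and the evaluation of $F_{\xibar}(\tau_0)$ and $F_{\xibar}(\rho_{\gamma_i})$ through the inner product $\langle S_{\xibar},P_{\mubar}\rangle=X_{\xibar}(\mubar)$ and the splitting of $p_2(0)$ and $p_1(\gamma_i)$ over the characters $\alpha_l$ (this is exactly the paper's Lemma 3.4). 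The argument is correct; the only blemish is a garbled intermediate expression for the $\tau_0$ factor, which should read $\kappa_{\xibar(l)}\dim\xibar(l)/\bigl((d_l-2)!\,d_l(d_l-1)\bigr)$ as in the paper, but this does not affect the conclusion.
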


If we consider the generating function $\Phi^\bullet_{\mubar,\nubar}(\lambda;x)_m$ defined in section 3.1, it is easy to obtain the corresponding formula for $\Phi^\bullet_{\mubar,\nubar}(\lambda;x)_m$:
\begin{equation}\label{eqn:Burnside}
\Phi^\bullet_{\mubar,\nubar}(\lambda;x)_m=\sum_{|\xibar|=d}
\frac{X_{\xibar}(\mubar)}{Z_{\mubar}}\frac{X_{\xibar}(\nubar)}{Z_{\nubar}}e^{F_{\xibar}(\tau_0)\lambda}
\prod_{i=1}^{m-1}e^{F_{\xibar}(\rho_{i})x_i\lambda}
\end{equation}

\section{Virtual Localization}
In this section, we calculate $\xn{K}$ by virtual localization.

\subsection{Fixed points}
The connected components of the $\bC^*$ fixed points set of $\Mbar^\bullet_{\chi, \gamma}(\cY_s, \mubar)$ are parameterized by labeled graphs. We first introduce some graph notations which are similar to those in \cite{Liu-Liu-Zhou1}.

Let
$$\left[ f:(C,x_1,\ldots,x_{n}, y_1, \cdots , y_{l(\mubar)})\to \cY_s[a]\ \right]\in \Mbar^\bullet_{\chi, \gamma}(\cY_s, \mubar) $$
be a fixed point of the $\bC^*$-action. The restriction of the map
$$ \tilde{f}=\pi_s[a]\circ f: C\to \cY_s. $$
to an irreducible component of $C$ is either a constant map to one of the $\bC^*$
fixed points $p_0=0, p_1=\infty$ or a cover of $\cY_s$ which is fully
ramified over $p_0$ and $p_1$. We associate a labeled graph $\Gamma$ to
the $\bC^*$ fixed point
$$\left[ f:(C,x_1,\ldots,x_{n}, y_1, \cdots , y_{l(\mubar)})\to \cY_s[a]\ \right]\in \Mbar^\bullet_{\chi, \gamma}(\cY_s, \mubar)$$ as follows:
\begin{enumerate}
\item We assign a vertex $v$ to each connected
component $C_v$ of $\tilde{f}^{-1}(\{p_0,p_1\})$, a label
$i(v)=i$ if $\tilde{f}(C_v)=p_i$, where $i=0,1$, and a label $g(v)$
which is the arithmetic genus of $C_v$ (We define $g(v)=0$ if $C_v$ is a
point). We assign a set $n(v)$ of marked points on $C_v$. Denote by $V(\Gamma)^{(i)}$ the set of vertices with $i(v)=i$,
where $i=0,1$. Then the set $V(\Gamma)$ of vertices of the graph $\Gamma$
is a disjoint union of $V(\Gamma)^{(0)}$ and $V(\Gamma)^{(1)}$.

\item We assign an edge $e$ to each rational irreducible component $C_e$ of $C$
such that $\tilde{f}|_{C_e}$ is not a constant map.
Let $d(e)$ be the degree of $\tilde{f}|_{C_e}$ and $l(e)$ the monodromy around the unique point on $C_e$ which lies over $p_1$.
Then $\tilde{f}|_{C_e}$ is fully ramified over $p_0$ and $p_1$.
Let $E(\Gamma)$ denote the set of edges of $\Gamma$.

\item The set of flags of $\Gamma$ is given by
$$F(\Gamma)=\{(v,e):v\in V(\Gamma), e\in E(\Gamma),
C_v\cap C_e\neq \emptyset \}.$$

\item For each $v\in V(\Gamma)$, define
$$d(v)=\sum_{(v,e)\in F(\Gamma)}d(e),$$
and let $\nubar(v)$ be the $\bZ_m$-weighted partition of $d(v)$
determined by $\{(d(e),l(e)): (v,e)\in F(\Gamma)\}$ and let $\nubar$ be the $\bZ_m$-weighted partition of $d$ determined by $\{(d(e),l(e)): e\in E(\Gamma)\}$ .
When the target is $\cY_s[a]$, where $a>0$,
we assign an additional label for each
$v\in V(\Gamma)^{(1)}$:
let $\mubar(v)$ be the $\bZ_m$-weighted partition of
$d(v)$ determined by the ramification of
$f|_{C_v}:C_v\to\cY_s[a]$ over $p_1^{(a)}$.
\end{enumerate}
Note that for $v\in V(\Gamma)^{(1)}$,
$\nubar(v)$ has the same partition but the opposite monodromies with the $\bZ_m$-weighted partition of $d(v)$
determined by the ramification of $f|_{C_v}:C_v\to \cY_0(a)$
over $p_1^{(0)}$.

For any $e\in E(\Gamma)$, consider the map $f|_{C_e}:C_e\to \cY_s$. If the monodromy around the unique point on $C_e$ which lies over $p_1$ is $l(e)$, then by Lemma II.13 in \cite{Joh}, the monodromy around the unique point on $C_e$ which lies over $p_0$ is $-l(e)-d(e)s$. Let
$$w^s_{d(e)}(l(e))=-l(e)-d(e)s.$$
Similarly, for any $\bZ_m$-weighted partition $\nubar=\{(\nu_1,l_1),\cdots,(\nu_{l(\nu)},l_{l(\nu)})\}$, let
$$w^s(\nubar)=\{(\nu_1,w^s_{\nu_1}(l_1)),\cdots,(\nu_{l(\nu)},w^s_{\nu_{l(\nu)}}(l_{l(\nu)}))\}.$$

Let $\cM_{(\nu_i,l_i)}$ be the moduli space of $\bC^*$-fixed degree $\nu_i$ covers of $\cY_s$ with monodromies $l_i$ and $w^s_{\nu_i}(l_i)$ around $\infty$ and $0$ respectively. Let
\begin{eqnarray*}
J_{\chi,\mubar,\gamma}=\{(\chi^0,\chi^1,\nubar,\gamma^0,\gamma^1)|\chi^0,\chi^1\in 2\bZ,|\nubar|=|\mubar|,\gamma^0\sqcup  \gamma^1=\gamma,\\
-\chi^0+2l(\nubar)-\chi^1=-\chi,-\chi^0+2l(\nubar)\geq 0,-\chi^1+l(\nubar)+l(\mubar)\geq 0\}.
\end{eqnarray*}
Then the $\bC^*$-fixed locus can be identified with
\begin{eqnarray*}
\bigsqcup_{(\chi^0,\chi^1,\nubar,\gamma^0,\gamma^1)\in J_{\chi,\mubar,\gamma}}&&(\Mbar^\bullet_{\chi^0, \gamma^0-w^s_{\nu}(l)}(\cB\bZ_m)\times_{\bar{I}\cB\bZ_{m}^{l(\nubar)}}\cM_{(\nu_1,l_1)}
\times\cdots\times\cM_{(\nu_{l(\nubar)},l_{l(\nubar)})}\\
&&\times_{\bar{I}\cB\bZ_{m}^{l(\nubar)}}\Mbar^\bullet_{\chi^1, \gamma^1}(\cY_0,-\nubar, \mubar)//\bC^*)/\Aut(\nubar)
\end{eqnarray*}
where $\bar{I}\cB\bZ_{m}$ is the rigidified inertia stack of $\cB\bZ_{m}, l=(l_1,\cdots,l_{l(\nubar
)})$ and $w^s_{\nu}(l)=(w^s_{\nu_1}(l_1),\cdots,w^s_{\nu_{l(\nu)}}(l_{l(\nu)})$. Therefore, we can calculate our integral over
$$\bigsqcup_{(\chi^0,\chi^1,\nubar,\gamma^0,\gamma^1)\in J_{\chi,\mubar,\gamma}}\Mbar^\bullet_{\chi^0, \gamma^0-w^s_{\nu}(l)}(\cB\bZ_m)\times\Mbar^\bullet_{\chi^1, \gamma^1}(\cY_0,-\nubar, \mubar)//\bC^*$$
provided we include the following factor
$$\frac{1}{|\Aut(\nubar)|}\prod_{i=1}^{l(\nubar)}\frac{1}{m\nu_i}(\frac{m}{b_i})(\frac{m}{c_i})$$
where $b_i=\frac{m}{\textrm{gcd}(m,l_i)}$ and $c_i=\frac{m}{\textrm{gcd}(m,w^s_{\nu_i}(l_i))}$ are the orders of $l_i\in \bZ_m$ and $w^s_{\nu_i}(l_i)\in\bZ_m$ respectively.

We will use the following convention for the unstable integrals to simplify our expression
\begin{eqnarray*}
\int_{\Mbar_{0,(0)}(\cB\bZ_m)}\frac{1}{1-d\bar{\psi}}&=&\frac{1}{md^2}\\
\int_{\Mbar_{0,(c,-c)}(\cB\bZ_m)}\frac{1}{(1-d_1\bar{\psi}_1)(1
-d_2\bar{\psi_2})}
&=& \frac{1}{m(d_1+d_2)}\\
\int_{\Mbar_{0,(c,-c)}(\cB\bZ_m)}\frac{1}{(1-d\bar{\psi}_1)}=\frac{1}{md}
\end{eqnarray*}

\subsection{Contribution from each graph}
Calculations similar to those in section 4 of \cite{Zong} show that
$$\frac{i_{\Gamma}^*e_{\bC^*}(V^0)}{e_{\bC^*}(N_{\Gamma}^{\mathrm{vir} })}=A^{0}_0A^{1}_0$$
where
\begin{eqnarray*}
A^0_0&=&\sqrt{-1}^d\prod_{i=1}^{l(\nubar)}\nu_i\prod_{i=1}^{l(\nubar)}\frac{\prod_{j=0}^{\nu_i-1}(\nu_{i}\tau+\frac{l_i}{m}+j)}
{\nu_i!(u-\nu_{i}\bar{\psi_i})\frac{\Gcd(l_{i},m)}{m}}\left(\frac{\nu_{i}\tau+\frac{l_i}{m}}{\nu_i}u\right)^
{-\delta_{0,l_i}}\\
&&\cdot\prod_{v \in V(\Gamma)^{(0)}}(\sqrt{-1})^{|\nubar(v)|+l(\nubar(v))-2\sum_{i\in A''(\nubar(v))}\frac{m-l_i}{m}}
\Lambda_{g(v)}^{\vee,U}(\tau u)\Lambda_{g(v)}^{\vee,U^\vee}((-\tau-1)u)
\Lambda_{g(v)}^{\vee,1}(u)\\
&&\cdot(-1)^{-\delta_v}\left(\tau u(\tau+1)u\right)^{\sum_{(v,e)\in F(\Gamma)}\delta_{0,l(e)}-\delta_v}( u)^{l(\nubar(v))-1}\\
A^1_0&=& \left\{\begin{array}{ll}\sqrt{-1}^{l''(\mubar)-l'(\mubar)-2\sum_{i\in A''(\mubar)}\frac{k_i}{m}}, &\textrm{the target is }\cY_0\\
\sqrt{-1}^{l''(\mubar)-l'(\mubar)-2\sum_{i\in A''(\mubar)}\frac{k_i}{m}}\sqrt{-1}^{l(\gamma^1)-2\sum_{\gamma_i\in \gamma^1}\frac{\gamma_i}{m}}&\\
\cdot\prod_{i=1}^{l(\nubar)}\frac{m\nu_i}{\textrm{gcd}(m,l_i)}\frac{(\sqrt{-1}\tau u)^{-\chi^{1}+l(\mubar)+l(\nubar)+l(\gamma^1)}}{-u-\psi^0},  &\textrm{the target is }\cY_0[a],a>0
\end{array}\right.
\end{eqnarray*}
where
$$\delta_{v}=\left\{\begin{array}{ll}1, &\textrm{if all monodromies around loops on $C_v$ are trivial}\\
0, &\textrm{otherwise}.\end{array} \right.$$

For $1\leq s\leq m-1$, we have
$$\frac{i_{\Gamma}^*e_{\bC^*}(V^s)}{e_{\bC^*}(N_{\Gamma}^{\mathrm{vir} })}=A^{0}_sA^{1}_s$$
where
\begin{eqnarray*}
A^0_s&=&\sqrt{-1}^d\prod_{i=1}^{l(\nubar)}\nu_i\prod_{i=1}^{l(\nubar)}
\frac{\prod_{j=1}^{\nu_i}(-\frac{w^s_{\nu_i}(l_i)}{m}+j)u^{-\delta_{0,w^s_{\nu_i}(l_i)}}}
{\nu_i!(u-\nu_{i}\bar{\psi_i})\frac{\Gcd(l_{i},m)}{m}}\\
&&\cdot\prod_{v \in V(\Gamma)^{(0)}}(\sqrt{-1})^{|\nubar(v)|+l(\nubar(v))-2(\frac{|\nubar(v)|s}{m}+\sum_{i=1}^{l(\nubar(v))}\frac{w^s_{\nu_i}(l_i)}{m}})
\Lambda_{g(v)}^{\vee,U}(0)\Lambda_{g(v)}^{\vee,U^\vee}(-u)
\Lambda_{g(v)}^{\vee,1}(u)\\
&&\cdot(-1)^{\delta_v}(0)^{\sum_{(v,e)\in F(\Gamma)}\delta_{0,w^s_{d(e)}(l(e))}-\delta_v}(u)^{2(\sum_{(v,e)\in F(\Gamma)}\delta_{0,w^s_{d(e)}(l(e))}-\delta_v)+l(\nubar(v))-1}\\
A^1_s&=& \left\{\begin{array}{ll}\sqrt{-1}^{l''(\mubar)-l'(\mubar)-2\sum_{i\in A''(\mubar)}\frac{k_i}{m}}(\frac{su}{m})^{-l'(\mubar)}, &\textrm{the target is }\cY_s\\
\sqrt{-1}^{l''(\mubar)-l'(\mubar)-2\sum_{i\in A''(\mubar)}\frac{k_i}{m}}\sqrt{-1}^{l(\gamma^1)-2\sum_{\gamma_i\in \gamma^1}\frac{\gamma_i}{m}}&\\
\cdot(\frac{su}{m})^{-l'(\mubar)}\prod_{i=1}^{l(\nubar)}\frac{m\nu_i}{\textrm{gcd}(m,l_i)}
\frac{(\frac{\sqrt{-1}su}{m})^{-\chi^{1}+l(\mubar)+l(\nubar)+l(\gamma^1)}}{-u-\psi^0},  &\textrm{the target is }\cY_s[a],a>0
\end{array}\right.
\end{eqnarray*}

\subsection{Proof of Theorem 1}
\begin{eqnarray*}
&&K^{\bullet 0}_{\chi,\mubar,\gamma}\\
&=&\frac{1}{\am}\int_{[\Mbar^\bullet_{\chi, \gamma}(\cY_0, \mubar)]^{\vir}}e(V^0)\\
&=&\frac{1}{\am}\sum_{(\chi^0,\chi^1,\nubar,\gamma^0,\gamma^1)\in J_{\chi,\mubar,\gamma}}\frac{1}{|\Aut(\nubar)|}\prod_{i=1}^{l(\nubar)}\frac{1}{m\nu_i}(\frac{m}{b_i})^2\\
&&\cdot\int_{[\Mbar^\bullet_{\chi^0, \gamma^0+l}(\cB\bZ_m)\times\Mbar^\bullet_{\chi^1, \gamma^1}(\cY_0,-\nubar, \mubar)//\bC^*]^{\vir}}\frac{i_{\Gamma}^*e_{\bC^*}(V^0)}{e_{\bC^*}(N_{\Gamma}^{\mathrm{vir} })}\\
&=&\sqrt{-1}^{d+l''(\mubar)-l'(\mubar)-2\sum_{i\in A''(\mubar)}\frac{k_i}{m}}\sum_{(\chi^0,\chi^1,\nubar,\gamma^0,\gamma^1)\in J_{\chi,\mubar,\gamma}}\\
&&\left(G^\bullet_{\chi^0,\nubar,\gamma^0}(\tau)_{m}\cdot Z_{\nubar}\frac{(-\sqrt{-1}\tau)^{-\chi^1 +l(\nubar) +l(\mubar)+l(\gamma^1)}}
{(-\chi^1 +l(\nubar) +l(\mubar))!}H^\bullet_{\chi^1,\gamma^1}(\mubar,-\nubar)_m\cdot\sqrt{-1}^{l(\gamma^1)-2\sum_{\gamma_i\in \gamma^1}\frac{\gamma_i}{m}}\right)
\end{eqnarray*}
Define the generating function $K^{\bullet 0}_{\mubar}(\lambda;x)$ to be
$$K^{\bullet 0}_{\mubar }(\lambda;x)=\sqrt{-1}^{-(d+l''(\mubar)-l'(\mubar)-2\sum_{i\in A''(\mubar)}\frac{k_i}{m})}\sum_{\chi,\gamma}\lambda^{-\chi+l(\mubar)+l(\gamma)}K^{\bullet 0}_{\chi,\mubar,\gamma}\frac{x_\gamma}{\gamma!}.$$
Then we have
$$K^{\bullet 0}_{\mubar }(\lambda;x)=\sum_{|\nubar|=|\mubar|}G^\bullet_{\nubar}(\lambda;\tau;x)_mZ_{\nubar}
\Phi^\bullet_{-\nubar,\mubar}(-\sqrt{-1}\tau\lambda;\sqrt{-1}^{1-\frac{2}{m}}x_1,\cdots,\sqrt{-1}^{1-\frac{2i}{m}}x_i,
\cdots,\sqrt{-1}^{1-\frac{2(m-1)}{m}}x_{m-1})_m.$$
Let $\tau=0$ we have
$$K^{\bullet 0}_{\mubar}(\lambda;x)=G^\bullet_{\mubar}(\lambda;0;x).$$
Define $G_d(\tau)=(G^\bullet_{\nubar}(\lambda;\tau;x)_m)_{|\nubar|=d}$ and $G_d(0)=(G^\bullet_{\mubar}(\lambda;0;x)_m)_{|\mubar|=d}$ to be two column vectors indexed by $\nubar$ and $\mubar$ respectively. Let $\Phi_d(\tau)=(\Phi_d^{\mubar,\nubar}(\tau))_{|\mubar|=d,|\nubar|=d}$ be a matrix indexed by $\nubar$ and $\mubar$, where
$$\Phi_d^{\mubar,\nubar}(\tau)=Z_{\nubar}
\Phi^\bullet_{-\nubar,\mubar}(-\sqrt{-1}\tau\lambda;\sqrt{-1}^{1-\frac{2}{m}}x_1,\cdots,\sqrt{-1}^{1-\frac{2i}{m}}x_i,
\cdots,\sqrt{-1}^{1-\frac{2(m-1)}{m}}x_{m-1})_m.$$
$\Phi_d(\tau)$ is invertible because if we view its entries as elements in $\bC[[\lambda,x]]$ then only the diagonal entries have constant terms. So we have
\begin{equation}\label{eqn:framing1}
G_d(\tau)=\Phi_d(\tau)^{-1}G_d(0)
\end{equation}
By the orthogonality of characters and (\ref{eqn:Burnside}), it is easy to see that
\begin{equation}\label{eqn:othogonality1}
\Phi^\bullet_{\mubar,\nubar}(\lambda_1+\lambda_2,x)_m=\sum_{\xibar}\Phi^\bullet_{\mubar,\xibar}(\lambda_1,x)_m Z_{\xibar}
\Phi^\bullet_{-\xibar,\nubar}(\lambda_2,x)_m,
\end{equation}
and
\begin{equation}\label{eqn:othogonality2}
\Phi^\bullet_{\mubar,\nubar}(0,x)_m=\frac{1}{Z_{\mubar}}\delta_{\mubar,-\nubar}.
\end{equation}
Therefore, (\ref{eqn:framing1}) is equivalent to
\begin{equation}\label{eqn:framing2}
G^\bullet_{\mubar}(\lambda;\tau;x)_m=\sum_{|\nubar|=|\mubar|}G^\bullet_{\nubar}(\lambda;0;x)_m Z_{\nubar}
\Phi^\bullet_{-\nubar,\mubar}(\sqrt{-1}\tau\lambda;\tilde{x})_m,
\end{equation}
where $\tilde{x}=(\sqrt{-1}^{1-\frac{2}{m}}x_1,\cdots,\sqrt{-1}^{1-\frac{2i}{m}}x_i,
\cdots,\sqrt{-1}^{1-\frac{2(m-1)}{m}}x_{m-1})$. This finishes the proof of Theorem 1.

\subsection{Proof of Theorem 2}
For any $1\leq s\leq m-1$ and $l'(\mubar)\neq 0$, we have
\begin{eqnarray*}
0&=&\xn{K}\\
&=&\frac{1}{\am}\int_{[\Mbar^\bullet_{\chi, \gamma}(\cY_s, \mubar)]^{\vir}}e(V^s)\\
&=&\frac{1}{\am}\sum_{(\chi^0,\chi^1,\nubar,\gamma^0,\gamma^1)\in J_{\chi,\mubar,\gamma}}\frac{1}{|\Aut(\nubar)|}\prod_{i=1}^{l(\nubar)}\frac{1}{m\nu_i}(\frac{m}{b_i})(\frac{m}{c_i})\\
&&\cdot\int_{[\Mbar^\bullet_{\chi^0, \gamma^0-w^s_{\nu}(l)}(\cB\bZ_m)\times\Mbar^\bullet_{\chi^1, \gamma^1}(\cY_s,-\nubar, \mubar)//\bC^*]^{\vir}}\frac{i_{\Gamma}^*e_{\bC^*}(V^s)}{e_{\bC^*}(N_{\Gamma}^{\mathrm{vir} })}\\
&=&\sqrt{-1}^{d+l''(\mubar)-l'(\mubar)-2(\frac{s}{m}|\mubar|+\sum_{i\in A''(\mubar)}\frac{k_i}{m})}(\frac{s}{m})^{-l'(\mubar)}\sum_{(\chi^0,\chi^1,\nubar,\gamma^0,\gamma^1)\in J_{\chi,\mubar,\gamma}}\\
&&\left(G^\bullet_{\chi^0,-w^s(\nubar),\gamma^0}(0)_{m}\cdot Z_{\nubar}\frac{(-\frac{\sqrt{-1}s}{m})^{-\chi^1 +l(\nubar) +l(\mubar)+l(\gamma^1)}}
{(-\chi^1 +l(\nubar) +l(\mubar))!}H^\bullet_{\chi^1,\gamma^1}(\mubar,-\nubar)_m\cdot\sqrt{-1}^{l(\gamma^1)-2\sum_{\gamma_i\in \gamma^1}\frac{\gamma_i}{m}}\right)
\end{eqnarray*}
Define the generating function $K^{\bullet s}_{\mubar}(\lambda;x)$ to be
$$K^{\bullet s}_{\mubar }(\lambda;x)=\sqrt{-1}^{-(d+l''(\mubar)-l'(\mubar)-2(\frac{s}{m}|\mubar|+\sum_{i\in A''(\mubar)}\frac{k_i}{m}))}(\frac{s}{m})^{l'(\mubar)}\sum_{\chi,\gamma}\lambda^{-\chi+l(\mubar)+l(\gamma)}K^{\bullet s}_{\chi,\mubar,\gamma}\frac{x_\gamma}{\gamma!}.$$
Then we have
\begin{equation}\label{eqn:key}
0=K^{\bullet s}_{\mubar }(\lambda;x)
=\sum_{|\nubar|=|\mubar|}G^\bullet_{-w^s(\nubar)}(\lambda;0;x)_mZ_{\nubar}
\Phi^\bullet_{-\nubar,\mubar}(-\frac{\sqrt{-1}s}{m}\lambda;\tilde{x})_m,
\end{equation}
where $\tilde{x}=(\sqrt{-1}^{1-\frac{2}{m}}x_1,\cdots,\sqrt{-1}^{1-\frac{2i}{m}}x_i,
\cdots,\sqrt{-1}^{1-\frac{2(m-1)}{m}}x_{m-1})$.

Now notice that when $k_i=0$ for some $i\in\{1,\cdots,l(\mubar)\}$, if $G_{g,\mubar,\gamma}(0)_m$ is nonzero, then the following two conditions must be satisfied
\begin{enumerate}
\item $l(\mubar)=1$.

\item $\gamma=\emptyset$.
\end{enumerate}
Let $\cM$ be the connected component of $\Mbar_{g,(0)}(\cB\bZ_m)$ such that the monodromies around the $2g$ noncontractible loops are trivial. Then
$$G_{g,\{(d,0)\},\emptyset}(0)_m=-\sqrt{-1}^{d+1}\int_{\cM}\frac{\Lambda_{g}^{\vee,U}(0 )\Lambda_{g}^{\vee,U^\vee}(-1)
\Lambda_{g}^{\vee,1}(1)}{1-d\bar{\psi}_1}$$
There is a canonical map $\rho:\cM \to \Mbar_{g,1}$ with deg$\rho=\frac{1}{m}$. On the other hand, we have
$$E^U|_{\cM}\cong E^{U^\vee}|_{\cM}\cong E^1|_{\cM}.$$
Therefore
\begin{eqnarray*}
G_{g,\{(d,0)\},\emptyset}(0)_m&=&-\frac{\sqrt{-1}^{d+1}}{m}\int_{\Mbar_{g,1}}\frac{\Lambda_{g}^{\vee}(0 )\Lambda_{g}^{\vee}(-1)
\Lambda_{g}^{\vee}(1)}{1-d\psi_1}\\
&=&-\frac{\sqrt{-1}^{d+1}}{m}d^{2g-2}\int_{\Mbar_{g,1}}\lambda_g\psi_1^{2g-2}
\end{eqnarray*}
So we have
$$G_{\{(d,0)\}}(\lambda;0;x)_m=-\frac{\sqrt{-1}^{d+1}}{2md\sin(\frac{d\lambda}{2})}.$$
Therefore, for any $\mubar$ with $k_1=\cdots=k_{l(\mubar)}=0$ we have
\begin{equation}\label{eqn:initial}
G^\bullet_{\mubar}(\lambda;0;x)_m=\frac{1}{|\Aut(\mubar)|}
\prod_{i=1}^{l(\mubar)}\left(-\frac{\sqrt{-1}^{\mu_i+1}}{2m\mu_i\sin(\frac{\mu_i\lambda}{2})}\right).
\end{equation}

Now for any $-w^s(\nubar)$, let $\xibar=\{(\nu_i,-w^s_{\nu_i}(l_i))|w^s_{\nu_i}(l_i)=0\}$ and $\etabar=\{(\nu_i,-w^s_{\nu_i}(l_i))|w^s_{\nu_i}(l_i)\neq0\}$. Then $-w^s(\nubar)=\xibar\sqcup\etabar$ and
\begin{equation}\label{eqn:prod}
G^\bullet_{-w^s(\nubar)}(\lambda;0;x)_m=G^\bullet_{\xibar}(\lambda;0;x)_mG^\bullet_{\etabar}(\lambda;0;x)_m
\end{equation}
because of condition (1) above. Let $\etabar=\{(\eta_1,h_1),\cdots,(\eta_{l(\eta)},h_{l(\eta)})\}$ with $h_1,\cdots,h_{l(\eta)}$ nontrivial. Let $c=\Gcd(m,\eta_1)$ and let $\bar{h}_1\in \{0,\cdots,c-1\}$ denote $h_1$(mod $c$). Let
$$\Sigma_{\etabar}=\{s\in\{1,\cdots,m-1\}|-h_1+\eta_1s=-\bar{h}_1\in \bZ_m\}$$
Then we have
$$|\Sigma_{\etabar}|=\left\{\begin{array}{ll}c-1,&\textrm{if }h_1\in\{1,\cdots,c-1\}\\
c,&\textrm{otherwise}\end{array} \right.$$
If we view $\Sigma_{\etabar}$ as a subset of $\{1,\cdots,m-1\}$, we can give $\Sigma_{\etabar}$ an order: $\Sigma_{\etabar}=\{s_1,\cdots,s_{|\Sigma_{\etabar}|}\}, s_i< s_j$ if $i<j$. Define $s(\etabar)\in\Sigma_{\etabar}$ to be
$$s(\etabar)=\left\{\begin{array}{ll}s_{\bar{h}_1},&\textrm{if }h_1\in\{1,\cdots,c-1\}\\
s_{\bar{h}_1+1},&\textrm{otherwise}\end{array} \right.$$
Let
\begin{eqnarray*}
B_d&=&\{\etabar||\etabar|\leq d,l(\etabar)=l''(\etabar)\}\\
C_d&=&\{(\mubar,s)|\mu=\eta,s=s(\etabar),k_1=0,-w^s(\mubar\setminus \{(\mu_1,k_1)\})=\etabar\setminus \{(\eta_1,h_1)\},|\etabar|\leq d,l(\etabar)=l''(\etabar)\}.
\end{eqnarray*}
Let $\tilde{x}=(\sqrt{-1}^{1-\frac{2}{m}}x_1,\cdots,\sqrt{-1}^{1-\frac{2i}{m}}x_i,
\cdots,\sqrt{-1}^{1-\frac{2(m-1)}{m}}x_{m-1})$, then we define
$$\beta_d=(-\sum_{|\xibar|=|\mubar|,l(\xibar)=l'(\xibar)}G^\bullet_{\xibar}(\lambda;0;x)_mZ_{w^s(\xibar)}
\tilde{\Phi}^\bullet_{-w^s(\xibar),\mubar}(-\frac{\sqrt{-1}s}{m}\lambda;\tilde{x})_m)_{(\mubar,s)\in C_d}$$
and
$$G'_d=(G^\bullet_{\etabar}(\lambda;0;x)_m)_{\etabar\in B_d}$$
to be two column vectors indexed by $(\mubar,s)$ and $\etabar$ respectively. Let $$\tilde{\Phi}_d(\lambda;x)=(\tilde{\Phi}_d^{(\mubar,s),\etabar}(\lambda;x))_{(\mubar,s)\in C_d,\etabar\in B_d}$$
be a matrix indexed by $(\mubar,s)$ and $\etabar$, where
$$\tilde{\Phi}_d^{(\mubar,s),\etabar}(\lambda;x)=\left\{\begin{array}{ll}0,&\textrm{if}|\etabar|>|\mubar|\\
Z_{w^s(-\etabar)}
\tilde{\Phi}^\bullet_{-w^s(-\etabar),\mubar}(-\frac{\sqrt{-1}s}{m}\lambda;\tilde{x})_m,&\textrm{if}|\etabar|=|\mubar|\\
\sum_{|\xibar|=|\mubar|-|\etabar|,l(\xibar)=l'(\xibar)}G^\bullet_{\xibar}(\lambda;0;x)_mZ_{w^s(-(\xibar\sqcup\etabar))}
\tilde{\Phi}^\bullet_{-w^s(-(\xibar\sqcup\etabar)),\mubar}(-\frac{\sqrt{-1}s}{m}\lambda;\tilde{x})_m
&\textrm{if}|\etabar|<|\mubar|\end{array} \right.$$
We will show that $\tilde{\Phi}_d(\lambda;x)$ is invertible over $\bC((\lambda,x))$ in Appendix A.
Then by (\ref{eqn:key}) we have
$$G'_d=\tilde{\Phi}_d^{-1}(\lambda;x)\beta_d$$
This finishes the proof of Theorem 2.

\section{Examples}
In this section, we will compute the degree 1 and degree 2 $\bZ_2$ Gromov-Witten vertices. Then we will use these results to compute some $\bZ_2$ -Hodge integrals which appear in \cite{Ros}.

\subsection{degree 1 case}
The degree 1 $\bZ_2$ Gromov-Witten vertex has been computed in \cite{Ros}. We use our formula to recompute it and use this result to compute the degree 2 $\bZ_2$ Gromov-Witten vertices in the next subsection.

When $d=1$ and $m=2$, let $\chi=2-2g$ and $n=l(\gamma)$, then we have
\begin{eqnarray*}
H^\bullet_{\chi,\gamma}(\mubar,\nubar)_2&=&H^\circ_{g,\gamma}(\mubar,\nubar)_2\\
&=&\delta_{0,\langle \frac{n+k_1+l_1}{2}\rangle}2^{2g-1}H^\circ_{g,n}(\{(1)\},\{(1)\})\\
&=&\delta_{0,\langle \frac{n+k_1+l_1}{2}\rangle}2^{2g-1}H^\circ_{g}(\{(1)\},\{(1)\})\\
&=&\delta_{0,g}\delta_{0,\langle \frac{n+k_1+l_1}{2}\rangle}2^{2g-1}
\end{eqnarray*}
In particular
\begin{eqnarray*}
H^\bullet_{\chi,\gamma}(\{(1,0)\},\{(1,0)\})&=&\delta_{0,g}\delta_{0,\langle \frac{n}{2}\rangle}2^{2g-1}\\
H^\bullet_{\chi,\gamma}(\{(1,1)\},\{(1,0)\})&=&\delta_{0,g}\delta_{0,\langle \frac{n+1}{2}\rangle}2^{2g-1}
\end{eqnarray*}
So we have
\begin{eqnarray*}
\Phi^\bullet_{\{(1,0)\},\{(1,0)\}}(-\frac{\sqrt{-1}}{2}\lambda;x)_2&=&\frac{1}{2}\cos(\frac{\lambda x}{2})\\
\Phi^\bullet_{\{(1,1)\},\{(1,0)\}}(-\frac{\sqrt{-1}}{2}\lambda;x)_2&=&-\frac{\sqrt{-1}}{2}\sin(\frac{\lambda x}{2})
\end{eqnarray*}
By theorem 2
$$2\Phi^\bullet_{\{(1,0)\},\{(1,0)\}}(-\frac{\sqrt{-1}}{2}\lambda;x)_2G^\bullet_{\{(1,1)\}}(\lambda;0;x)_2
=-2\Phi^\bullet_{\{(1,1)\},\{(1,0)\}}(-\frac{\sqrt{-1}}{2}\lambda;x)_2G^\bullet_{\{(1,0)\}}(\lambda;0;x)_2$$
By equation (\ref{eqn:initial})
$$G^\bullet_{\{(1,0)\}}(\lambda;0;x)_2=\frac{1}{4\sin(\frac{\lambda}{2})}$$
Therefore, we obtain
\begin{equation}\label{eqn:deg1}
G^\bullet_{\{(1,1)\}}(\lambda;0;x)_2=\frac{\sqrt{-1}}{4\sin(\frac{\lambda}{2})}\tan (\frac{\lambda x}{2})
\end{equation}
By theorem 1
\begin{eqnarray*}
G^\bullet_{\{(1,1)\}}(\lambda;0;x)_2&=&2\Phi^\bullet_{\{(1,1)\},\{(1,1)\}}(-\tau\sqrt{-1}\lambda;x)_2
G^\bullet_{\{(1,1)\}}(\lambda;\tau;x)_2\\
&&+2\Phi^\bullet_{\{(1,1)\},\{(1,0)\}}(-\tau\sqrt{-1}\lambda;x)_2
G^\bullet_{\{(1,0)\}}(\lambda;\tau;x)_2\\
G^\bullet_{\{(1,0)\}}(\lambda;0;x)_2&=&2\Phi^\bullet_{\{(1,1)\},\{(1,0)\}}(-\tau\sqrt{-1}\lambda;x)_2
G^\bullet_{\{(1,1)\}}(\lambda;\tau;x)_2\\
&&+2\Phi^\bullet_{\{(1,0)\},\{(1,0)\}}(-\tau\sqrt{-1}\lambda;x)_2
G^\bullet_{\{(1,0)\}}(\lambda;\tau;x)_2
\end{eqnarray*}
Similar to the computation above, we have $\Phi^\bullet_{\{(1,1)\},\{(1,1)\}}(-\tau\sqrt{-1}\lambda;x)_2=\frac{1}{2}\cos (\tau\lambda x)$. So the degree 1 framed vertices are given by
\begin{eqnarray*}
G^\bullet_{\{(1,1)\}}(\lambda;\tau;x)_2&=&\frac{\sqrt{-1}\sin(\tau\lambda x)}{4\sin(\frac{\lambda}{2})}+\frac{\sqrt{-1}\cos (\tau\lambda x)}{4\sin(\frac{\lambda}{2})}\tan (\frac{\lambda x}{2})\\
G^\bullet_{\{(1,0)\}}(\lambda;\tau;x)_2&=&-\frac{\sin(\tau\lambda x)}{4\sin(\frac{\lambda}{2})}\tan (\frac{\lambda x}{2})
+\frac{\cos (\tau\lambda x)}{4\sin(\frac{\lambda}{2})}
\end{eqnarray*}

\subsection{degree 2 case}
In this section, we compute the degree 2 $\bZ_2$ Gromov-Witten vertices for $\tau=0$. Then we use these results to compute the predictions of the $\bZ_2$-Hodge integrals in \cite{Ros}. These results can be viewed as an evidence for the conjecture of the orbifold GW/DT correspondence in \cite{Ros}.

When $d=2$, we only need to notice that all degree 2 double Hurwitz numbers are $\frac{1}{2}$. Then calculations similar to those in degree 1 case show that
\begin{eqnarray*}
\Phi^\bullet_{\{(2,0)\},\{(2,0)\}}(-\frac{\sqrt{-1}}{2}\lambda;x)_2&=&\frac{1}{4}\cos(\lambda)
\cos(\lambda x) \\
\Phi^\bullet_{\{(2,0)\},\{(1,1),(1,1)\}}(-\frac{\sqrt{-1}}{2}\lambda;x)_2&=&-\frac{\sqrt{-1}}{8}\sin(\lambda)
\cos(\lambda x) \\
\Phi^\bullet_{\{(2,0)\},\{(2,1)\}}(-\frac{\sqrt{-1}}{2}\lambda;x)_2&=&-\frac{\sqrt{-1}}{4}\cos(\lambda)
\sin(\lambda x) \\
\Phi^\bullet_{\{(1,1),(1,0)\},\{(2,0)\}}(-\frac{\sqrt{-1}}{2}\lambda;x)_2&=&-\frac{1}{4}\sin(\lambda)
\sin(\lambda x)\\
\Phi^\bullet_{\{(1,0),(1,0)\},\{(2,0)\}}(-\frac{\sqrt{-1}}{2}\lambda;x)_2&=&-\frac{\sqrt{-1}}{8}\sin(\lambda)
\cos(\lambda x)\\
\Phi^\bullet_{\{(1,0),(1,0)\},\{(2,1)\}}(-\frac{\sqrt{-1}}{2}\lambda;x)_2&=&-\frac{1}{8}\sin(\lambda)
\sin(\lambda x)\\
\Phi^\bullet_{\{(1,0),(1,0)\},\{(1,0),(1,0)\}}(-\frac{\sqrt{-1}}{2}\lambda;x)_2&=&\frac{1}{16}(\cos(\lambda)
\cos(\lambda x)+1)\\
\Phi^\bullet_{\{(1,1),(1,0)\},\{(1,0),(1,0)\}}(-\frac{\sqrt{-1}}{2}\lambda;x)_2&=&-\frac{\sqrt{-1}}{8}\cos(\lambda)
\sin(\lambda x)\\
\Phi^\bullet_{\{(1,0),(1,0)\},\{(1,1),(1,1)\}}(-\frac{\sqrt{-1}}{2}\lambda;x)_2&=&\frac{1}{16}(\cos(\lambda)
\cos(\lambda x)-1)\\
\end{eqnarray*}
By equation (\ref{eqn:key}), we have
\begin{eqnarray*}
0&=&4G^\bullet_{\{(2,1)\}}(\lambda;0;x)_2\Phi^\bullet_{\{(2,1)\},\{(2,0)\}}(-\frac{\sqrt{-1}}{2}\lambda;x)_2
+4G^\bullet_{\{(2,0)\}}(\lambda;0;x)_2\Phi^\bullet_{\{(2,0)\},\{(2,0)\}}(-\frac{\sqrt{-1}}{2}\lambda;x)_2\\
&&+8G^\bullet_{\{(1,1),(1,1)\}}(\lambda;0;x)_2
\Phi^\bullet_{\{(1,0),(1,0)\},\{(2,0)\}}(-\frac{\sqrt{-1}}{2}\lambda;x)_2\\
&&+4G^\bullet_{\{(1,1),(1,0)\}}(\lambda;0;x)_2
\Phi^\bullet_{\{(1,1),(1,0)\},\{(2,0)\}}(-\frac{\sqrt{-1}}{2}\lambda;x)_2\\
&&+8G^\bullet_{\{(1,0),(1,0)\}}(\lambda;0;x)_2
\Phi^\bullet_{\{(1,1),(1,1)\},\{(2,0)\}}(-\frac{\sqrt{-1}}{2}\lambda;x)_2
\end{eqnarray*}
\begin{eqnarray*}
0&=&4G^\bullet_{\{(2,1)\}}(\lambda;0;x)_2\Phi^\bullet_{\{(2,1)\},\{(1,0),(1,0)\}}(-\frac{\sqrt{-1}}{2}\lambda;x)_2\\
&&+4G^\bullet_{\{(2,0)\}}(\lambda;0;x)_2\Phi^\bullet_{\{(2,0)\},\{(1,0),(1,0)\}}(-\frac{\sqrt{-1}}{2}\lambda;x)_2\\
&&+8G^\bullet_{\{(1,1),(1,1)\}}(\lambda;0;x)_2
\Phi^\bullet_{\{(1,0),(1,0)\},\{(1,0),(1,0)\}}(-\frac{\sqrt{-1}}{2}\lambda;x)_2\\
&&+4G^\bullet_{\{(1,1),(1,0)\}}(\lambda;0;x)_2
\Phi^\bullet_{\{(1,1),(1,0)\},\{(1,0),(1,0)\}}(-\frac{\sqrt{-1}}{2}\lambda;x)_2\\
&&+8G^\bullet_{\{(1,0),(1,0)\}}(\lambda;0;x)_2
\Phi^\bullet_{\{(1,1),(1,1)\},\{(1,0),(1,0)\}}(-\frac{\sqrt{-1}}{2}\lambda;x)_2\\
\end{eqnarray*}
By equation (\ref{eqn:initial}) (\ref{eqn:prod}) (\ref{eqn:deg1})we have
\begin{eqnarray*}
G^\bullet_{\{(2,0)\}}(\lambda;0;x)_2&=&\frac{\sqrt{-1}}{8\sin(\lambda)}\\
G^\bullet_{\{(1,0),(1,0)\}}(\lambda;0;x)_2&=&\frac{1}{32\sin^2(\frac{\lambda}{2})}\\
G^\bullet_{\{(1,1),(1,0)\}}(\lambda;0;x)_2&=&\frac{\sqrt{-1}}{16\sin^2(\frac{\lambda}{2})}\tan(\frac{\lambda x}{2})
\end{eqnarray*}
Therefore the two nontrivial degree 2 vertices are given by
\begin{eqnarray*}
G^\bullet_{\{(2,1)\}}(\lambda;0;x)_2&=&-\frac{1}{8}\tan(\frac{\lambda x}{2})\frac{\cos(\lambda x)+\cos(\lambda)+1}
{\sin(\lambda)(\cos(\lambda x)+\cos(\lambda))}\\
G^\bullet_{\{(1,1),(1,1)\}}(\lambda;0;x)_2&=&-\frac{1}{16\cos^2(\frac{\lambda x}{2})(\cos(\lambda x)+\cos(\lambda))}
-\frac{1}{32\sin^2(\frac{\lambda}{2})}\tan^2(\frac{\lambda x}{2})
\end{eqnarray*}

Now we use these two vertices to compute the predictions in \cite{Ros}. The following Hodge integrals are defined in \cite{Ros}:
\begin{eqnarray*}
G(1,g):&=&\sum_{\gamma}\int_{\Mbar_{g,\gamma}(\cB\bZ_2)}\frac{\Lambda_{g}^{\vee,U}(0 )\Lambda_{g}^{\vee,U^\vee}(-1)
\Lambda_{g}^{\vee,1}(1)}{\frac{1}{2}-\bar{\psi}_1}\frac{x^{l(\gamma)-1}}{(l(\gamma)-1)!}\\
G(2,g):&=&\sum_{\gamma}\int_{\Mbar_{g,\gamma}(\cB\bZ_2)}\frac{\Lambda_{g}^{\vee,U}(0 )\Lambda_{g}^{\vee,U^\vee}(-1)
\Lambda_{g}^{\vee,1}(1)}{(1-\bar{\psi}_1)(1-\bar{\psi}_2)}\frac{x^{l(\gamma)-2}}{(l(\gamma)-2)!}
\end{eqnarray*}
where $\gamma=(1,\cdots,1)$ is a vector of nontrivial elements in $\bZ_2$.

Notice that
\begin{eqnarray*}
G_{\{(2,1)\}}(\lambda;0;x)_2&=&G^\bullet_{\{(2,1)\}}(\lambda;0;x)_2\\
G_{\{(1,1),(1,1)\}}(\lambda;0;x)_2&=&
G^\bullet_{\{(1,1),(1,1)\}}(\lambda;0;x)_2-\frac{1}{2}G_{\{(1,1)\}}(\lambda;0;x)_2^2\\
&=&-\frac{1}{16\cos^2(\frac{\lambda x}{2})(\cos(\lambda x)+\cos(\lambda))}
\end{eqnarray*}
So if we define $a_1(x),a_2(x),a_3(x)$ to be the coefficients of $\lambda,\lambda^3,\lambda^5$ in $G_{\{(2,1)\}}(\lambda;0;\frac{x}{\lambda})_2$ respectively, then we have
\begin{eqnarray*}
G(1,1)&=&\frac{16}{3}a_1(x)=\int\frac{-1}{12}\sec^4(\frac{x}{2})+\frac{5}{24}\sec^6(\frac{x}{2})dx\\
G(1,2)&=&\frac{16}{3}a_2(x)=\int\frac{-1}{240}\sec^4(\frac{x}{2})-\frac{13}{288}\sec^6(\frac{x}{2})
+\frac{7}{96}\sec^8(\frac{x}{2})dx\\
G(1,3)&=&\frac{16}{3}a_3(x)=\int\frac{-11}{30240}\sec^4(\frac{x}{2})+\frac{1}{576}\sec^6(\frac{x}{2})
-\frac{1}{48}\sec^8(\frac{x}{2})+\frac{3}{128}\sec^{10}(\frac{x}{2})dx
\end{eqnarray*}
This gives the first three predictions in \cite{Ros}. Similarly, if we define $b_1(x),b_2(x),b_3(x)$ to be the coefficients of $\lambda^2,\lambda^4,\lambda^6$ in $G_{\{(1,1),(1,1)\}}(\lambda;0;\frac{x}{\lambda})_2$ respectively, then we have
\begin{eqnarray*}
G(2,1)&=&-8b_1(x)=\frac{1}{16}\sec^6(\frac{x}{2})\\
G(2,2)&=&-8b_2(x)=\frac{-1}{192}\sec^6(\frac{x}{2})+\frac{1}{64}\sec^8(\frac{x}{2})\\
G(2,3)&=&-8b_3(x)=\frac{1}{5760}\sec^6(\frac{x}{2})-\frac{1}{384}\sec^8(\frac{x}{2})+\frac{1}{256}\sec^{10}(\frac{x}{2})
\end{eqnarray*}
This gives the next three predictions.

\section{The Gromov-Witten Invariants of the Local $\cB\bZ_m$ Gerbe}
Let $\cX$ be the global quotient of the resolved conifold Tot$(\cO(-1)\oplus\cO(-1)\to \bP^1)$ by $\bZ_m$ acting fiberwise by $\xi_m$ and $\xi_m^{-1}$ respectively, where $\xi_m=e^{\frac{2\pi \sqrt{-1}}{m}}$. Then $\cX$ can be identified with Tot$(L_0\otimes\cO_{\cY_0}(-1)\oplus L_0^{-1}\otimes\cO_{\cY_0}(-1)\to \cY_0)$ where $L_0$ is the tautological bundle on $\cY_0$.

Define $C^\bullet_{\chi,d,\gamma}$ to be
$$C^\bullet_{\chi,d,\gamma}=\int_{[\Mbar^\bullet_{\chi,\gamma}(\cY_0,d)]^\vir}
e(R^1\pi_*F^*(L_0\otimes\cO_{\cY_0}(-1)\oplus L_0^{-1}\otimes\cO_{\cY_0}(-1)))$$
where
$$\pi:\cU\to \Mbar^\bullet_{\chi, \gamma}(\cY_0, d)$$
is the universal domain curve and
$$F:\cU\to \cY_0$$
is the evaluation map.
Let
\begin{eqnarray*}
J_{\chi,d,\gamma}=\{(\chi^1,\chi^2,\mubar,\gamma^1,\gamma^2)|\chi^1,\chi^2\in 2\bZ,|\mubar|=d,\gamma^1\sqcup  \gamma^2=\gamma,\\
-\chi^1+2l(\mubar)-\chi^2=-\chi,-\chi^1+2l(\mubar)\geq 0,-\chi^2+2l(\mubar)\geq 0\}.
\end{eqnarray*}
Then by the degeneration formula (see \cite{Abr-Fan})
$$C^\bullet_{\chi,d,\gamma}=\sum_{(\chi^1,\chi^2,\mubar,\gamma^1,\gamma^2)\in J_{\chi,d,\gamma}}K^{\bullet 0}_{\chi^1,\mubar,\gamma^1}Z_{\mubar}K^{\bullet 0}_{\chi^2,-\mubar,\gamma^2}$$
where $K^{\bullet 0}_{\chi,\mubar,\gamma }=\frac{1}{\am}\int_{[\Mbar^\bullet_{\chi, \gamma}(\cY_0, \mubar)]^{\vir}}e(V^0)$, $V^0$ is the obstruction bundle on $\Mbar^\bullet_{\chi, \gamma}(\cY_0, \mubar)$ and $Z_{\mubar}=|\Aut(\mubar)|m^{l(\mubar)}\prod_{i=1}^{l(\mubar)}\mu_i$.
Let
$$C^\bullet_{d}(\lambda;x)=\sum_{\chi,\gamma}\lambda^{-\chi+l(\gamma)}C^\bullet_{\chi,d,\gamma}\frac{x_\gamma}{\gamma!}$$
Then we have
$$C^\bullet_{d}(\lambda;x)=\sum_{|\mubar|=d}(-1)^{d-l'(\mubar)}K^{\bullet 0}_{\mubar}(\lambda;x)
Z_{\mubar}K^{\bullet 0}_{-\mubar}(\lambda;x)$$
where $K^{\bullet 0}_{\mubar}(\lambda;x)$ is defined in section 4.3. Recall that
$$K^{\bullet 0}_{\mubar}(\lambda;x)=G^\bullet_{\mubar}(\lambda;0;x)_m$$
Therefore
$$C^\bullet_{d}(\lambda;x)=\sum_{|\mubar|=d}(-1)^{d-l'(\mubar)}G^\bullet_{\mubar}(\lambda;0;x)_m
Z_{\mubar}G^\bullet_{-\mubar}(\lambda;0;x)_m$$
This finishes the calculation of the Gromov-Witten invariants of the local $\cB\bZ_m$ gerbe and hence finishes the proof of Theorem 3.

\begin{appendix}
\section{The Invertibility of $\tilde{\Phi}_d(\lambda;x)$}
We first recall the definition of $\tilde{\Phi}_d(\lambda;x)$:
$$\tilde{\Phi}_d(\lambda;x)=(\tilde{\Phi}_d^{(\mubar,s),\etabar}(\lambda;x))_{(\mubar,s)\in C_d,\etabar\in B_d}$$
where
$$\tilde{\Phi}_d^{(\mubar,s),\etabar}(\lambda;x)=\left\{\begin{array}{ll}0,&\textrm{if}|\etabar|>|\mubar|\\
Z_{w^s(-\etabar)}
\tilde{\Phi}^\bullet_{-w^s(-\etabar),\mubar}(-\frac{\sqrt{-1}s}{m}\lambda;\tilde{x})_m,&\textrm{if}|\etabar|=|\mubar|\\
\sum_{|\xibar|=|\mubar|-|\etabar|,l(\xibar)=l'(\xibar)}G^\bullet_{\xibar}(\lambda;0;x)_mZ_{w^s(-(\xibar\sqcup\etabar))}
\tilde{\Phi}^\bullet_{-w^s(-(\xibar\sqcup\etabar)),\mubar}(-\frac{\sqrt{-1}s}{m}\lambda;\tilde{x})_m
&\textrm{if}|\etabar|<|\mubar|\end{array} \right.$$
and
\begin{eqnarray*}
B_d&=&\{\etabar||\etabar|\leq d,l(\etabar)=l''(\etabar)\}\\
C_d&=&\{(\mubar,s)|\mu=\eta,s=s(\etabar),k_1=0,-w^s(\mubar\setminus \{(\mu_1,k_1)\})=\etabar\setminus \{(\eta_1,h_1)\},|\etabar|\leq d,l(\etabar)=l''(\etabar)\}.
\end{eqnarray*}
Therefore, in order to show that $\tilde{\Phi}_d(\lambda;x)$ is invertible, we only need to show the matrix
$$\tilde{\Phi}'_d(\lambda;x)=(\tilde{\Phi}_d^{(\mubar,s),\etabar}(\lambda;x))_{(\mubar,s)\in C'_d,\etabar\in B'_d}$$
is invertible for every $d\geq 1$, where
\begin{eqnarray*}
B'_d&=&\{\etabar||\etabar|= d,l(\etabar)=l''(\etabar)\}\\
C'_d&=&\{(\mubar,s)|\mu=\eta,s=s(\etabar),k_1=0,-w^s(\mubar\setminus \{(\mu_1,k_1)\})=\etabar\setminus \{(\eta_1,h_1)\},|\etabar|= d,l(\etabar)=l''(\etabar)\}.
\end{eqnarray*}
$$\tilde{\Phi}_d=\left(
\begin{array}{cccc}
\tilde{\Phi}'_d&*&\cdots&*\\
0&\tilde{\Phi}'_{d-1}&\ddots&\vdots\\
\vdots&\ddots&\ddots&*\\
0&\cdots&0&\tilde{\Phi}'_1
\end{array} \right).$$

Now we will show that the determinant of the matrix $ \tilde{\Phi}'_d(\lambda;\frac{x}{\lambda})|_{\lambda=x_2=\cdots=x_{m-1}=0}$ is nonzero. For convenience, we will denote
$ \tilde{\Phi}'_d(\lambda;\frac{x}{\lambda})|_{\lambda=x_2=\cdots=x_{m-1}=0}$ by $\tilde{\Phi}'_d(x_1)$. Recall that
$$\Phi^\bullet_{\mubar,\nubar}(\lambda;x)_m=\sum_{\chi\in 2\bZ,\chi\leq\min\{2l(\mubar),2l(\nubar)\},\gamma}\frac{\lambda^{-\chi+l(\mubar)+l(\nubar)+l(\gamma)}}
{(-\chi+l(\mubar)+l(\nubar))!}H^\bullet_{\chi,\gamma}(\mubar,\nubar)_m\frac{x_\gamma}{\gamma!}$$
So if the underlying partitions $\mu\neq \eta$, then the entry $\tilde{\Phi}'^{(\mubar,s),\etabar}_d(x_1)=0$. Therefore, in order to show that the determinant of the matrix $\tilde{\Phi}'_d(x_1)$ is nonzero, we only need to show that for any fixed partition $\mu^0$ of $d$, the determinant of the sub-matrix
$$\tilde{\Phi}'_{\mu^0}(x_1)=(\tilde{\Phi}_d^{(\mubar,s),\etabar}(x_1))_{(\mubar,s)\in C'_d,\etabar\in B'_d,\mu=\eta=\mu^0}$$
is nonzero.
$$\tilde{\Phi}'_d(x_1)=\left(
\begin{array}{cccc}
\tilde{\Phi}'_{\mu^0}(x_1)&0&\cdots&0\\
0&\tilde{\Phi}'_{\mu^1}(x_1)&\ddots&\vdots\\
\vdots&\ddots&\ddots&0\\
0&\cdots&0&\ddots
\end{array} \right).$$

Let $\hat{\etabar}=\etabar\setminus \{(\eta_1,h_1)\}$ and $\mu^0=\{\mu^0_1,\cdots,\mu^0_{l(\mu_0)}\}$. Let $c_{\mu^0}=\Gcd(m,\mu^0_1)$ and for any $1\leq i\leq \frac{m}{c_{\mu^0}}$ define $D^i_{\mu^0}$ to be
$$D^i_{\mu^0}=\{j\in\bZ|(i-1)c_{\mu^0}\leq j<ic_{\mu^0}\}.$$
For fixed $\hat{\etabar}^0$ and $1\leq i\leq \frac{m}{c_{\mu^0}}$ define
\begin{eqnarray*}
B^i_{\mu^0,\hat{\etabar}^0}&=&\{\etabar|\eta=\mu^0,l(\etabar)=l''(\etabar),\hat{\etabar}=\hat{\etabar}^0,h_1\in D^i\}\\
C^i_{\mu^0,\hat{\etabar}^0}&=&\{(\mubar,s)|\mu=\eta,s=s(\etabar),k_1=0,-w^s(\mubar\setminus \{(\mu_1,k_1)\})=\hat{\etabar},\eta=\mu^0,l(\etabar)=l''(\etabar),\hat{\etabar}=\hat{\etabar}^0,h_1\in D^i\}.
\end{eqnarray*}
Then we define the sub-matrix $\tilde{\Phi}'_{\mu^0,\hat{\etabar}^0,i}(x_1)$ of $\tilde{\Phi}'_{\mu^0}(x_1)$ to be
$$\tilde{\Phi}'_{\mu^0,\hat{\etabar}^0,i}(x_1)=(\tilde{\Phi}_d^{(\mubar,s),\etabar}(x_1))_{(\mubar,s)\in C^i_{\mu^0,\hat{\etabar}^0},\etabar\in B^i_{\mu^0,\hat{\etabar}^0}}$$
\begin{lemma}
The determinant of the matrix $\tilde{\Phi}'_{\mu^0,\hat{\etabar}^0,i}(x_1)$ is nonzero
\end{lemma}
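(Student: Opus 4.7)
My plan is to reduce Lemma A.1 to a character-theoretic linear-algebra computation, exploiting the Burnside formula (\ref{eqn:Burnside}) and the fact that, within one sub-block, the rows and columns are essentially labelled by the first-cycle $\bZ_m$-weight of a weighted partition with otherwise fixed combinatorics. First I would substitute (\ref{eqn:Burnside}) into each entry of $\tilde{\Phi}'_{\mu^0,\hat{\etabar}^0,i}(x_1)$ and take the limit $\lambda\to 0$ in $\tilde{\Phi}_d^{(\mubar,s),\etabar}(\lambda;x/\lambda)|_{x_2=\cdots=x_{m-1}=0}$ prescribed by the definition of $\tilde{\Phi}'_d(x_1)$, obtaining the finite sum
$$\tilde{\Phi}_d^{(\mubar,s),\etabar}(x_1) \;=\; \frac{1}{Z_\mubar}\sum_{|\xibar|=d} X_\xibar(\mubar)\,X_\xibar(-w^s(-\etabar))\, \exp\!\Bigl(\tfrac{s\sqrt{-1}^{-2/m}}{m}F_\xibar(\rho_1)\,x_1\Bigr).$$
The key structural observation is that the bijection $s=s(\etabar_r)$ from the definition of $C^i_{\mu^0,\hat{\etabar}^0}$ forces $\mubar(s_r)$ and $-w^{s_r}(-\etabar_c)$ to share the underlying partition $\mu^0$ and every $\bZ_m$-weight except the one on the first (length-$\eta_1$) cycle, where they carry weights $0$ and $h_1^c-\eta_1 s_r$ respectively.

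Next I would use the expansion $p_\ell(k)=\sum_{j=0}^{m-1}\alpha_j(-k)p_\ell(\alpha_j)$ together with Proposition 3.2 to isolate the first-cycle dependence: the character $X_\xibar(-w^{s_r}(-\etabar_c))$ decomposes as $\sum_{j=0}^{m-1}\alpha_j(-h_1^c)\alpha_j(\eta_1 s_r)Y_{\xibar,j}(s_r)$, with $Y_{\xibar,j}(s_r)$ independent of $h_1^c$. This factors the sub-matrix as $\tilde{\Phi}'_{\mu^0,\hat{\etabar}^0,i}(x_1)=W(x_1)\cdot A^T$, where $A=(\alpha_j(-h_1^c))_{h_1^c\in D^i_{\mu^0},\,j}$ is the restriction of the $\bZ_m$-character table, and $W(x_1)$ assembles the $x_1$-exponentials together with the characters $X_\xibar(\mubar(s_r))$, the factors $\alpha_j(\eta_1 s_r)$, and the tail coefficients $Y_{\xibar,j}(s_r)$.

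To conclude that $\det\tilde{\Phi}'_{\mu^0,\hat{\etabar}^0,i}(x_1)\neq 0$ in $\bC((x_1))$, I would argue that both factors have rank equal to the block size ($c_{\mu^0}$ if $i\geq 2$, $c_{\mu^0}-1$ if $i=1$). Invertibility of the restricted character table $A$ is a classical Vandermonde computation at roots of unity, since $|D^i_{\mu^0}|=c_{\mu^0}$ consecutive integers select $c_{\mu^0}$ distinct values of $\omega_j=e^{2\pi\sqrt{-1}j/m}$. The analogous claim for $W$ uses the bijection $B^i_{\mu^0,\hat{\etabar}^0}\leftrightarrow\Sigma_i$ together with the linear independence of the exponentials $e^{c_\xibar s_r x_1}$ in $\bC((x_1))$ for distinct $c_\xibar=F_\xibar(\rho_1)\sqrt{-1}^{-2/m}/m$. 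The main obstacle will be the rank argument for $W$: because the $m$ columns of $W$ are not \emph{a priori} linearly independent (many are related by identities like $\alpha_j(\eta_1 s)=\alpha_{j+m/c_{\mu^0}}(\eta_1 s)$ coming from $\eta_1\cdot m/c_{\mu^0}\equiv 0\pmod m$), one must use the combinatorics of $s(\etabar)$ from Section 4.4 together with a careful choice of surviving $\xibar$'s to extract a $c_{\mu^0}$-dimensional Vandermonde block, rather than a singular sub-block of the DFT. This residue-class bookkeeping between $D^i_{\mu^0}$, $\Sigma_i$, and the set $\{j\bmod m/c_{\mu^0}\}$ is where the proof will be most delicate.
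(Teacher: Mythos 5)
Your reduction of the block to a product $W(x_1)A^{T}$ through the $\bZ_m$-character table is structurally plausible (up to sign conventions in the first-cycle weight), but the argument breaks at its final step, and this is a genuine gap rather than a detail. With $N=|B^i_{\mu^0,\hat{\etabar}^0}|$ the block size, your $W$ and $A$ are $N\times m$ with $m>N$ in general, and by Cauchy--Binet
$$\det\bigl(W A^{T}\bigr)=\sum_{S\subset\{0,\dots,m-1\},\,|S|=N}\det\bigl(W_{\cdot,S}\bigr)\,\det\bigl(A_{\cdot,S}\bigr),$$
so full row rank of both factors does \emph{not} imply that the product is invertible: already $W=(1,0)$, $A=(0,1)$ gives $WA^{T}=0$. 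One must show that the signed sum of products of maximal minors does not cancel, and nothing in the proposal addresses this. In addition, you explicitly defer the rank statement for $W$ itself --- the ``residue-class bookkeeping'' between $D^i_{\mu^0}$, $\Sigma_{\etabar}$ and the surviving $\xibar$ --- which is exactly the nontrivial content of the lemma; as written, the proposal names the difficulty without resolving it.

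For comparison, the paper's proof sidesteps the character table entirely by a leading-order analysis in $x_1$. Viewing each entry as an element of $\bC[[x_1]]$, only the boundary Euler characteristic $\chi=2l(\mubar)$ survives, and the minimal number of $\rho_1$-insertions connecting $\mubar$ to $-w^{s}(-\etabar)$ is $n(h_1)=\bar{h}_1\in\{0,\dots,c_{\mu^0}-1\}$, which is independent of $s$. The lowest-degree term of the $((\mubar,s),\etabar)$ entry is then an explicit nonzero constant (a ratio of automorphism factors times a power of $\eta_1$) multiplied by $\bigl(-\tfrac{\sqrt{-1}s}{m}\bigr)^{n(h_1)}x_1^{n(h_1)}$. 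Since $n(h_1)$ takes distinct values across the columns of the block and $s=s(\etabar)$ takes distinct values across its rows, the lowest-degree coefficient of the determinant is a nonzero multiple of the generalized Vandermonde determinant $\det\bigl((-\sqrt{-1}s/m)^{n(h_1)}\bigr)$, hence nonzero. If you wish to salvage your factorization, you would have to identify a single subset $S$ of characters contributing the strictly lowest power of $x_1$ in the Cauchy--Binet expansion --- at which point you are effectively redoing this lowest-order computation in heavier notation.
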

\begin{proof}
If we view the entries of the matrix $\tilde{\Phi}'_{\mu^0,\hat{\etabar}^0,i}(x_1)$ as power series in $\bC[[x_1]]$, then the lowest degree term of $\tilde{\Phi}_d^{(\mubar,s),\etabar}(x_1)$ is
$$Z_{w^s(-\etabar)}(-\frac{\sqrt{-1}s}{m})^{n(h_1)}H^\bullet_{2l(\mubar),\gamma(h_1)}(\mubar,-w^s(-\etabar))_m
\frac{(\sqrt{-1}^{1-\frac{2}{m}}x_1)^{n(h_1)}}{n(h_1)!}$$
where $n(h_1)=w^s_{\eta_1}(-h_1)=\bar{h}_1\in \{0,\cdots,c_{\mu^0}-1\}$ is \emph{independent} of $s=s(\etabar)$ and $\gamma(h_1)=(x_1,\cdots,x_1)$ with $l(\gamma)=n(h_1)$. Also note that $|\Aut(\etabar)|=|\Aut(w^s(-\etabar))|=|\Aut(-w^s(-\etabar))|$. Therefore, by the calculation in section 3.2, we have
\begin{eqnarray*}
Z_{w^s(-\etabar)}H^\bullet_{2l(\mubar),\gamma(h_1)}(\mubar,-w^s(-\etabar))_m&=&Z_{\etabar}\frac{1}{|\Aut(\etabar)|
|\Aut(\mubar)|}|\Aut(\etabar)|m^{-l(\etabar)}\eta_1^{n(h_1)}\prod_{i=1}^{l(\etabar)}\eta_i^{-1}\\
&=&\frac{|\Aut(\etabar)|}{|\Aut(\mubar)|}\eta_1^{n(h_1)}
\end{eqnarray*}
Therefore,
\begin{eqnarray*}
&&Z_{w^s(-\etabar)}(-\frac{\sqrt{-1}s}{m})^{n(h_1)}H^\bullet_{2l(\mubar),\gamma(h_1)}(\mubar,-w^s(-\etabar))_m
\frac{(\sqrt{-1}^{1-\frac{2}{m}}x_1)^{n(h_1)}}{n(h_1)!}\\
&=&(-\frac{\sqrt{-1}s}{m})^{n(h_1)}\frac{|\Aut(\etabar)|}{|\Aut(\mubar)|}
\frac{(\sqrt{-1}^{1-\frac{2}{m}}\eta_1x_1)^{n(h_1)}}{n(h_1)!}
\end{eqnarray*}
So the lowest degree term of $\det(\tilde{\Phi}'_{\mu^0,\hat{\etabar}^0,i}(x_1))$ is
\begin{eqnarray*}
&&\left(\prod_{(\mubar,s)\in C^i_{\mu^0,\hat{\etabar}^0}}\frac{1}{|\Aut(\mubar)|}\right)\left(\prod_{\etabar\in B^i_{\mu^0,\hat{\etabar}^0}}\frac{|\Aut(\etabar)|(\sqrt{-1}^{1-\frac{2}{m}}\eta_1x_1)^{n(h_1)}}{n(h_1)!}\right)\\
&&\cdot\det\left((-\frac{\sqrt{-1}s}{m})^{n(h_1)}\right)_{(\mubar,s)\in C^i_{\mu^0,\hat{\etabar}^0},\etabar\in B^i_{\mu^0,\hat{\etabar}^0}}
\end{eqnarray*}
Thus we only need to show $\det\left((-\frac{\sqrt{-1}s}{m})^{n(h_1)}\right)_{(\mubar,s)\in C^i_{\mu^0,\hat{\etabar}^0},\etabar\in B^i_{\mu^0,\hat{\etabar}^0}}$ is nonzero. But this is a Vandermonde matrix with different $s$ in different rows. So its determinant is nonzero.
\end{proof}

Now for any fixed column $\alpha_{\etabar^0}$ of $\tilde{\Phi}'_{\mu^0}(x_1)$, there is a unique sub-matrix $\tilde{\Phi}'_{\mu^0,\hat{\etabar}^0,i}(x_1)$ that intersects with this column. Then the degrees of the entries that lie in the intersection of $\alpha_{\etabar^0}$ and $\tilde{\Phi}'_{\mu^0,\hat{\etabar}^0,i}(x_1)$ are $n(h^0_1)=\bar{h}^0_1\in \{0,\cdots,c_{\mu^0}-1\}$. By the convention of the order of a $\bZ_m$-weighted partition (see section 2.1), the degrees of the other entries of $\alpha_{\etabar^0}$ are greater or equal to $n(h^0_1)$ (note that $k_1$ is always 0). And the equality holds for an entry $\tilde{\Phi}_d^{(\mubar',s'),\etabar^0}(x_1)$ among those entries if and only if the following conditions hold:
\begin{enumerate}
\item There exists a $j>1$ such that $\eta^0_j=\eta^0_1,\bar{h}^0_1=\bar{h}^0_j$ and $h^0_j>h^0_1$, where $\bar{h}^0_1$ and $\bar{h}^0_j$ denote $h^0_1$(mod $c_{\mu^0}$) and $h^0_j$(mod $c_{\mu^0}$) respectively.

\item Let $\hat{\etabar}'^{0}=\etabar^0\setminus \{(\eta^0_j,h^0_j)\}$, then $-w^{s'}(\mubar'\setminus \{(\mu'_1,k'_1)\})=\hat{\etabar}'^{0}$ and $-h^0_j+\eta^0_js'=-\bar{h}^0_j\in \bZ_m$.
\end{enumerate}
Then there is a unique sub-matrix $\tilde{\Phi}'_{\mu^0,\hat{\etabar}'^{0},i'}(x_1)$ that intersects with the row that contains $\tilde{\Phi}_d^{(\mubar',s'),\etabar^0}(x_1)$. By condition (2), $h^0_j\in D^{i'}$. It is easy to see that every entry that lies in the intersection of $\alpha_{\etabar^0}$ and a row of $\tilde{\Phi}'_{\mu^0,\hat{\etabar}'^{0},i'}(x_1)$ has degree $n(h^0_1)$.
$$\left(
\begin{array}{ccccccc}
&&&*&\cdots&\cdots&\cdots\\
&\tilde{\Phi}'_{\mu^0,\hat{\etabar}^0,i}(x_1)&&*&\cdots&\cdots&\cdots\\
&&&*&\cdots&\cdots&\cdots\\
*&*&*&\ddots&*&*&*\\
*&*&x_1^{n(h^0_1)}&*&&&\\
\vdots&\vdots&\vdots&*&&\tilde{\Phi}'_{\mu^0,\hat{\etabar}'^{0},i'}(x_1)&\\
*&*&x_1^{n(h^0_1)}&*&&&
\end{array}\right)$$
For every column $\alpha_{\etabar'}$ that intersects with $\tilde{\Phi}'_{\mu^0,\hat{\etabar}'^{0},i'}(x_1)$, $\bar{h}'_1\leq \bar{h}^0_1$ by our convention. But $\bar{h}'_1$ can not be equal to $\bar{h}^0_1$, because otherwise $h'_1=h^0_j$ by condition (1) and the fact that $h^0_j,h'_1\in D^{i'}$. Hence $\etabar^0=\etabar'^{0}$, a contradiction. So we have $\bar{h}'_1< \bar{h}^0_1$. In other words, condition (1) can not be satisfied for any entry that lies in $\alpha_{\etabar'}$ but does not lie in $\tilde{\Phi}'_{\mu^0,\hat{\etabar}'^{0},i'}(x_1)$. So the degrees of these entries are strictly greater than $n(h'_1)$. By Lemma A.1, we can use the elementary transforms for matrices to convert $\tilde{\Phi}'_{\mu^0,\hat{\etabar}'^{0},i'}(x_1)$ to $\Psi_{\mu^0,\hat{\etabar}'^{0},i'}(x_1)$ such that if we pick the degree $n(h_1')$ terms of the entries in the column $\alpha_{\etabar'}\cap \Psi_{\mu^0,\hat{\etabar}'^{0},i'}(x_1)$ of $\Psi_{\mu^0,\hat{\etabar}'^{0},i'}(x_1)$ for every $\alpha_{\etabar'}\cap \Psi_{\mu^0,\hat{\etabar}'^{0},i'}(x_1)\neq \emptyset$ to form a matrix, then this matrix is of the following form
$$\left(
\begin{array}{ccccc}
1&0&\cdots&\cdots&0\\
0&x_1&0&\cdots&0\\
\vdots&0&\ddots&\ddots&\vdots\\
\vdots&\vdots&\ddots&\ddots&0\\
0&0&\cdots&0&x_1^{|B^i_{\mu^0,\hat{\etabar}'^0}|-1}
\end{array}\right)$$
In this process, the following two properties do not change: (a) For every column $\alpha_{\etabar'}$ that intersects with $\tilde{\Phi}'_{\mu^0,\hat{\etabar}'^{0},i'}(x_1)$, the degrees of the entries that lie in $\alpha_{\etabar'}$ but do not lie in $\tilde{\Phi}'_{\mu^0,\hat{\etabar}'^{0},i'}(x_1)$ are strictly greater than $n(h'_1)$; (b) For every column $\alpha_{\etabar}$, let $\tilde{\Phi}'_{\mu^0,\hat{\etabar},i}(x_1)$ be the sub-matrix that intersects with $\alpha_{\etabar}$, then the degrees of the entries that lie in $\alpha_{\etabar}$ but do not lie in $\tilde{\Phi}'_{\mu^0,\hat{\etabar},i}(x_1)$ are no less than (or strictly greater than) $n(h_1)$. Now we can use the columns that intersect with $\Psi_{\mu^0,\hat{\etabar}'^{0},i'}(x_1)$ to cancel the degree $n(h^0_1)$ terms of the entries that lie in the intersection of $\alpha_{\etabar^0}$ and rows of $\Psi_{\mu^0,\hat{\etabar}'^{0},i'}(x_1)$.
$$\left(
\begin{array}{ccccccc}
&&&*&\cdots&\cdots&\cdots\\
&\tilde{\Phi}'_{\mu^0,\hat{\etabar}^0,i}(x_1)&&*&\cdots&\cdots&\cdots\\
&&&*&\cdots&\cdots&\cdots\\
*&*&*&\ddots&*&*&*\\
*&*&x_1^{n(h^0_1)}&*&&&\\
\vdots&\vdots&\vdots&*&&\Psi_{\mu^0,\hat{\etabar}'^{0},i'}(x_1)&\\
*&*&x_1^{n(h^0_1)}&*&&&
\end{array}\right)$$
Since property (a) is preserved, this process will not change the degree $n(h^0_1)$ terms of the entries that lie in $\alpha_{\etabar^0}$ but do not lie in the rows of $\Psi_{\mu^0,\hat{\etabar}'^{0},i'}(x_1)$. In particular, the lowest degree term of $\det\left(\tilde{\Phi}'_{\mu^0,\hat{\etabar}^0,i}(x_1)\right)$ does not change. Since property (b) is preserved, we can repeat this process until for every $\etabar$ all the degree $n(h_1)$ entries in $\alpha_{\etabar}$ lie in the unique sub-matrix $\tilde{\Phi}'_{\mu^0,\hat{\etabar},i}(x_1)$ (or $\Psi_{\mu^0,\hat{\etabar},i}(x_1)$ if $\tilde{\Phi}'_{\mu^0,\hat{\etabar},i}(x_1)$ has been changed) that intersects with $\alpha_{\etabar}$. Therefore, if we denote the result matrix of $\tilde{\Phi}'_{\mu^0}(x_1)$ of this process by $\Psi_{\mu^0}(x_1)$, then the lowest degree term of $\det\left(\Psi_{\mu^0}(x_1)\right)$ is the product of that of $\tilde{\Phi}'_{\mu^0,\hat{\etabar},i}(x_1)$ or $\Psi_{\mu^0,\hat{\etabar},i}(x_1)$ which is nonzero by Lemma A.1. So $\det\left(\tilde{\Phi}'_{\mu^0}(x_1)\right)$ is nonzero.

In conclusion, the matrix $\tilde{\Phi}_d(\lambda;x)$ is invertible.

\end{appendix}

\bigskip

\end{document}